\documentclass[12pt]{amsart}
\usepackage{amsmath,amssymb,amsthm}
\usepackage{tikz}
\usepackage{xcolor}
\usepackage[margin=1.15in]{geometry}
\usepackage[colorlinks=true,citecolor=blue,linkcolor=blue,urlcolor=blue]{hyperref}

\theoremstyle{plain}
\newtheorem{theorem}{Theorem}
\newtheorem{lemma}{Lemma}[section]
\newtheorem{proposition}{Proposition}[section]
\newtheorem{corollary}{Corollary}[section]
\newtheorem*{conjecture*}{Conjecture}

\theoremstyle{definition}
\newtheorem{definition}{Definition}[section]

\theoremstyle{remark}
\newtheorem{remark}{Remark}[section]

\newcommand{\N}{\mathbb{N}}
\newcommand{\Z}{\mathbb{Z}}
\newcommand{\A}{\mathcal{A}}
\newcommand{\B}{\mathcal{B}}
\newcommand{\Lo}{\mathcal{L}}
\newcommand{\core}{\operatorname{core}}
\newcommand{\wt}{\operatorname{wt}}
\newcommand{\Irr}{\operatorname{Irr}}

\renewcommand{\epsilon}{\varepsilon}

\title[Zero-free columns in character tables of symmetric groups]
      {Zero-free columns in the character tables of\\ symmetric groups}

\author[Defant]{Colin Defant}
\author[Hariharan]{Sidharth Hariharan}
\author[Lau]{Kenny Lau}
\author[Ono]{Ken Ono}

\address{Axiom Math, 124 University Avenue, Palo Alto, CA 94301}
\email{colin@axiommath.ai}
\email{kenny@axiommath.ai}
\email{ken@axiommath.ai}

\address{Dept. Mathematical Sciences,
Carnegie Mellon University,
Pittsburgh, PA 15213}
\email{shariha2@andrew.cmu.edu}

\subjclass[2020]{Primary 20C30; Secondary 05A17, 11N37, 11E25.}
\keywords{character tables, symmetric groups, Murnaghan--Nakayama rule}

\begin{document}

\begin{abstract}
The rows and columns of the character table of the symmetric group $S_n$ are both naturally indexed by partitions of
$n$.  Let $D(n)$ denote the number of conjugacy classes of $S_n$ whose
column contains no zero entry.  The identity column is always zero-free,
so $D(n)\geq 1$.  It is known that $D(n)\ll n^2$.  We prove that $D(n)\ll n^{3/4}$.  Second, we prove for almost all positive integers $n$ that $D(n)\ll_B n^{1/2}(\log n)^B$ for every $B>5/6$, with a quantitative bound for the exceptional set, using work of Matom\"aki and Radziwi\l\l.  Finally, we offer a
heuristic supporting our conjecture that $D(n)\ll_{\varepsilon}
n^{\varepsilon}$. 
AxiomProver formalized the results in this paper in Lean assuming preexisting literature.
\end{abstract}

\maketitle

\section{Introduction}\label{sec:intro}

Character tables are among the most useful invariants of finite groups.  If $G$ is finite, then a character table is a (necessarily square) matrix $[\chi(C)]$, with the rows indexed by the irreducible characters $\chi$ of $G$, the columns indexed by the conjugacy classes $C$ of $G$, and $\chi(C)$ the value $\chi(g)\in\mathbb{C}$ shared by all $g\in C$.

For the symmetric group $S_n$, the character table has a concrete combinatorial form~\cite{JK}.  The conjugacy
classes are indexed by partitions $\mu\vdash n$, recording the cycle
lengths of a permutation.  The irreducible characters are also indexed
by partitions $\lambda\vdash n$, through the Specht modules.  We write
$\chi^{\lambda}(\mu)$ for the value of the irreducible character indexed
by $\lambda$ on the conjugacy class of cycle type $\mu$.  Thus, if $p(n)$ is the parition function, then the character table of $S_n$ is the square matrix
\[
  \mathcal{C}_n=\bigl[\chi^{\lambda}(\mu)\bigr]_{\lambda,\mu\vdash n},
\]
with $p(n)$ rows and $p(n)$ columns.  Hardy~and~Ramanujan~\cite{HR} proved the asymptotic
\begin{equation}\label{eq:HR}
  p(n)\sim\frac{1}{4n\sqrt{3}}\,e^{\pi\sqrt{2n/3}}.
\end{equation}

Zeros in these character tables have been studied from several different
perspectives.  Let $\Irr(S_n)$ be the set of irreducible characters of
$S_n$.  Miller \cite{Mi} proved that
\begin{equation}\label{eq:miller}
  \lim_{n\to\infty}
  \frac{\#\{(\chi,g)\in \Irr(S_n)\times S_n:\ \chi(g)=0\}}
       {p(n)\,n!}=1.
\end{equation}
Since $g$ is drawn uniformly from the group, the conjugacy class of $g$
is drawn with probability proportional to its size; in this sense
Miller's model weights conjugacy classes by their sizes.

A different
model, considered by Miller--Scheinerman~\cite{MS} and by
Peluse--Soundararajan \cite{PS}, weights the entries of the character
table uniformly.  In other words, they consider the problem where one
chooses both $\lambda$ and $\mu$ uniformly from the partitions of $n$. To be precise, we let
\[
  Z(n):=\#\{(\lambda,\mu):\ \lambda,\mu\vdash n,\ \chi^{\lambda}(\mu)=0\}
\]
denote the number of zero entries in the character table of $S_n$.  Thus, the full character table has $p(n)^2$ entries.  Miller and Scheinerman conjecture \cite{MS}, on the basis of large-scale Monte Carlo experiments, that
\[
  Z(n)\sim\frac{2p(n)^2}{\log n}.
\]
At present, no asymptotic formula is known for $Z(n)$, and the best
general upper bound for $Z(n)$ is the trivial estimate
$Z(n)\leq p(n)^2$.

On the other hand, Peluse and Soundararajan \cite{PS} proved that the
zeros produced by ``core-partition criteria,'' which can be thought of
as trivial zeros, already have the conjectured order of magnitude.
Following \cite{PS}, for $t\in\N$, let $\mathcal{H}_t(\lambda)$ be the number of hook lengths of $\lambda$ that are multiples of $t$, and let $\mathcal{P}_t(\mu):=\tfrac1t\sum_{t\mid\mu_j}\mu_j$.  Thus, $t\mathcal{P}_t(\mu)$ is the sum of the parts of $\mu$ divisible by $t$.  A pair $(\lambda,\mu)$ with $\chi^{\lambda}(\mu)=0$ is called a zero of \emph{type I} if $\lambda$ is a $\mu_1$-core, where $\mu_1$ is the largest part of $\mu$; a zero of \emph{type II} if $\lambda$ is a $t$-core for some part size $t$ of $\mu$; and a zero of \emph{type III} if Stanley's criterion applies (that is, if $\mathcal{P}_t(\mu)>\mathcal{H}_t(\lambda)$ for some $t$).  That each of these conditions forces $\chi^\lambda(\mu)=0$ follows from the Murnaghan--Nakayama rule. These families are nested:
\[
  \{\text{type I zeros}\}\subseteq\{\text{type II zeros}\}
  \subseteq\{\text{type III zeros}\}
  \subseteq\{(\lambda,\mu):\lambda,\mu\vdash n,\ \chi^{\lambda}(\mu)=0\}.
\]
If $Z_{\mathrm{I}}(n)$, $Z_{\mathrm{II}}(n)$,
$Z_{\mathrm{III}}(n)$ are the respective cardinalities, then Peluse and Soundararajan proved that
\[
  Z_{\mathrm{III}}(n)
  =p(n)^2\left(\frac{2}{\log n}
  +O\!\left(\frac{(\log\log n)^2}{(\log n)^2}\right)\right).
\]
Since these are genuine zeros, this gives the unconditional lower bound
\[
  Z(n)\geq p(n)^2\left(\frac{2}{\log n}
  +O\!\left(\frac{(\log\log n)^2}{(\log n)^2}\right)\right),
\]
which matches the conjectured asymptotic for $Z(n)$.  Thus the known
lower bounds for the number of zeros in the full table are presently
much stronger than the known upper bounds.

In this paper, we consider a complementary problem.  Instead of counting all zero entries, we count the number of columns that avoid zeros altogether.  To make this precise, a conjugacy
class, or the corresponding column of $\mathcal{C}_n$, is called
\emph{zero-free} if every entry in that column is nonzero.  For
$n\geq 1$, define
\begin{equation}\label{eq:Ddef}
  D(n):=\#\{\mu\vdash n:\ \chi^{\lambda}(\mu)\neq 0
  \text{ for every }\lambda\vdash n\}.
\end{equation}
The identity column, corresponding to $\mu=(1^n)$, is always zero-free
because $\chi^{\lambda}(1^n)=\dim S^{\lambda}>0$.  Therefore, the
assertion that every non-identity column contains a zero is precisely
the assertion that $D(n)=1$.

Duro \cite{Duro} studied this condition in connection with the
generalized Knutson index.  He observed that if every non-trivial column
of a finite group character table contains a zero, then the generalized
Knutson index agrees with the Knutson index \cite[Proposition~2.3]{Duro}.
For symmetric groups, he reported the initial values
\[
  1,\ 5,\ 6,\ 8,\ 9,\ 10,\ 12,\ 14,\ 17,\ 21,\ 28,\ 30,\ 32,\ 34,\ 36,\
  37,\ 38,\ \ldots
\]
of $n$ for which $D(n)=1$ \cite[Question~4.17]{Duro}.  Our computations
(described in Section~\ref{sec:appendix}) confirm this list and extend
it: the integers $n\leq 43$ with $D(n)=1$ are precisely
\[
  1,5,6,8,9,10,12,14,17,21,28,30,32,34,36,37,38,40,42.
\]
For comparison, we have\footnote{The computation is described in
Section~\ref{sec:appendix}.}
\[
  D(2)=D(3)=D(4)=2,\quad\ldots,\quad D(7)=4,\quad\ldots,\quad
  D(42)=1,\quad D(43)=7.
\]

Motivated by Duro's work and its connection to the computation of
Knutson indices, we consider the problem of deriving upper bounds for
$D(n)$.  It follows from \cite[Proposition~4.15]{Duro} that
\begin{equation}\label{eq:duro-bound}
  D(n)\ll n^2.
\end{equation}
In light of \eqref{eq:HR}, Duro's work already implies that zero-free
columns form an exponentially negligible fraction of all columns of the
character table.  We prove two results that improve the exponent in
\eqref{eq:duro-bound}, one for all $n$ and one for almost all $n$.

The main idea is that Duro's reduction confines zero-free columns to
cycle types of the form $(3^a,2^b,1^c)$ with $b$ even.  The
Murnaghan--Nakayama rule then turns the condition of being zero-free
into two independent core-distance restrictions: the number of
$2$-cycles is bounded by the distance from $n$ to the nearest
$2$-core size, while the number of $3$-cycles is bounded by the distance
from $n$ to the nearest $3$-core size.  The former restriction leads to
a triangular-number approximation problem, and the latter restriction
leads to an approximation problem by Loeschian numbers, or equivalently
by norms in the Eisenstein integers.

Our first theorem gives a pointwise bound.

\begin{theorem}\label{thm:pointwise}
For positive integers $n$,we have $D(n)\ll n^{3/4}$.
\end{theorem}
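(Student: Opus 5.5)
Our plan starts from Duro's reduction, which we take from the introduction and from \cite{Duro}. Every zero-free column has cycle type $\mu=(3^a,2^b,1^c)$ with $b$ even and $3a+2b+c=n$. So it suffices to count the pairs $(a,b)$ for which such a $\mu$ can be zero-free.

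The next step is to prove two necessary conditions via the Murnaghan--Nakayama rule.

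\emph{The $2$-cycle condition.} Let $\lambda$ be a $2$-core, i.e.\ a staircase $\delta_k$ of size $T_k=k(k+1)/2$. If we remove the $3$-hooks first, the remaining shape must still admit $b$ successive $2$-hook removals. Here we order the parts of $\mu$ so that the $3$'s are removed first, then the $2$'s, then the $1$'s; the character value does not depend on this ordering. A shape admitting $b$ successive $2$-hook removals has $2$-weight at least $b$. Taking $\lambda$ to have $2$-core of size close to $n$ and small $2$-weight forces every term to vanish. This gives a bound of the shape $2b\le \operatorname{dist}(n,\{T_k\})+O(1)$ in a suitable sense.

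\emph{The $3$-cycle condition.} Similarly we get $3a\le \operatorname{dist}(n,\{\text{$3$-core sizes}\})+O(1)$. The sizes of $3$-cores are exactly the numbers $m$ with $3m+1$ a Loeschian number $x^2+xy+y^2$.

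Concretely, we would choose $\lambda$ to be a $t$-core of size $m\le n$ nearest to $n$, augmented by adding a hook, so that $\lambda$ has $t$-weight $\lfloor (n-m)/t\rfloor$ (up to bounded adjustment). If $\mu$ has more than that many parts equal to $t$, then Stanley's criterion $\mathcal P_t(\mu)>\mathcal H_t(\lambda)$ gives $\chi^\lambda(\mu)=0$. Because $\mathcal P_2$ also counts the parts equal to $2$ only, while $\mathcal P_3$ counts only the $3$'s, the conditions decouple as $b\ll d_2(n)$ and $a\ll d_3(n)$. Here $d_t(n)$ denotes the distance from $n$ down to the largest $t$-core size not exceeding $n$.

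It remains to bound $d_2(n)$ and $d_3(n)$. Consecutive triangular numbers near $n$ differ by $\asymp\sqrt n$, so $d_2(n)\ll\sqrt n$. For $d_3$ we need an integer $N\le 3n+1$, with $N\equiv 1\pmod 3$, of the form $x^2+xy+y^2$ and within $o$-controlled distance of $3n+1$. The elementary greedy argument works as follows. Pick $x$ with $x^2\le N$ maximal, then represent the remainder $R\ll \sqrt N$ approximately by choosing $y$ and correcting with the cross term. This yields gaps $\ll N^{1/4}$ for norms from $\Z[\omega]$, in the style of the classical sums-of-two-squares gap bound. The congruence condition mod $3$ can be arranged by adjusting $x,y$ mod $3$ at bounded cost, so $d_3(n)\ll n^{1/4}$. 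Multiplying the two bounds gives $D(n)\le \#\{(a,b)\}\ll (d_2(n)+1)(d_3(n)+1)\ll n^{1/2}\cdot n^{1/4}=n^{3/4}$. Once $a$ and $b$ are fixed, $c$ is determined.

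We expect the main obstacle to be the rigorous Murnaghan--Nakayama step: producing, for each $n$, a $\lambda$ whose $t$-weight is provably small (about $d_t(n)/t$) so that Stanley's criterion applies. This has to be done uniformly, including when $n-m$ is not divisible by $t$; there we would attach a hook of length $\equiv n-m \pmod t$ to the core and check its hook lengths. The $N^{1/4}$ gap bound for Loeschian numbers with the mod-$3$ constraint is the second delicate point. We would handle it with the explicit construction $x=\lfloor\sqrt{N}\rfloor-j$ for bounded $j$, chosen to fix residues.
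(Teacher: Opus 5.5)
Your architecture is the paper's: Duro's reduction, then a separate core obstruction for $t=2$ and $t=3$, then gaps between triangular and Loeschian numbers, then the product bound. There is a genuine gap, however. The quantity $d_t(n)$ you bound is the wrong one, and the step you flag as the main obstacle cannot be carried out as proposed.

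\textbf{The distance must be taken within the residue class of $n$.} For every $\lambda\vdash n$ one has $|\core_t(\lambda)|\equiv n\pmod t$ and $\mathcal{H}_t(\lambda)=\wt_t(\lambda)=(n-|\core_t(\lambda)|)/t$. So if $m$ is a $t$-core size with $m\not\equiv n\pmod t$, no way of attaching boxes or a hook to a $t$-core of size $m$ produces a $\lambda\vdash n$ with that core. The $t$-weight of any $\lambda\vdash n$ is governed by the nearest $t$-core size below $n$ \emph{in the residue class of $n$ modulo $t$}, not by $d_t(n)$.

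Consequently, the most Stanley's criterion can give is $b\le s_2(n)$ and $a\le s_3(n)$, where $s_t(n)$ is the least $s\ge 0$ such that $n-ts$ is a $t$-core size. Your inequality $2b\le d_2(n)+O(1)$ cannot be obtained this way. For example, take $n=T_k+1$. Then $d_2(n)=1$, but the nearest triangular number of the same parity as $n$ is $T_{k-1}$ or $T_{k-2}$, so every $\lambda\vdash n$ has $\mathcal{H}_2(\lambda)\ge s_2(n)\asymp\sqrt n$. Hence no type III zero ever kills $(2^b,1^{n-2b})$ for $b\le s_2(n)$. (The paper's data also illustrates this: at $n=43$, where $d_2(43)=7$, the column $(3,2^{10},1^{20})$ is zero-free.)

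The correct construction is simpler than yours. Take a $t$-core $\gamma$ of size $n-ts$ and append $ts$ boxes to its first row. Then $\core_t(\lambda)=\gamma$ and $\wt_t(\lambda)=s$. Processing the $t$-parts of $\mu$ first in Murnaghan--Nakayama gives $\chi^\lambda(\mu)=0$ whenever $\mu$ has more than $s$ parts equal to $t$.

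\textbf{The number theory changes accordingly.} For $t=2$ you need triangular numbers of the same parity as $n$. These still have gaps $\ll\sqrt n$; for instance, write $8n+1-16s=q^2$ with $q$ in the two admissible classes modulo $8$.

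For $t=3$ you need a Loeschian $M\le N=3n+1$ with $M\equiv N\pmod 9$ and $N-M\ll N^{1/4}$. A condition modulo $3$ alone is not enough; it is in any case automatic, since $x^2+xy+y^2\equiv(x-y)^2\pmod 3$. Your greedy sketch only controls residues modulo $3$. The paper handles modulus $9$ as follows:
\begin{itemize}
\item Let $T=\lfloor N^{1/4}\rfloor$ and choose $A\equiv r\pmod 9$ with $r^2\equiv N\pmod 9$ and $A\in[\sqrt{N-27T^2},\sqrt N]$.
\item Let $y=\lfloor\sqrt{(N-A^2)/27}\rfloor$ and set $M=A^2+27y^2=(A-3y)^2+(A-3y)(6y)+(6y)^2$.
\item Then $M\equiv A^2\equiv N\pmod 9$ and $N-M<27(2T+1)$.
\end{itemize}

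With the distances replaced by $s_2(n)$ and $s_3(n)$, your count $(s_3(n)+1)(s_2(n)+1)\ll n^{3/4}$ goes through exactly as in the paper.
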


Our second theorem, which relies on work of Matom\"aki and
Radziwi\l\l\ \cite{MR}, improves the ``$3$-cycle bound'' for almost all
$n$.

\begin{theorem}\label{thm:almostall}
For all $B>5/6$, there exists a constant $C_B>0$ such that if $\epsilon>0$ and $X\geq 3$, then
\[
  \#\bigl\{1\leq n\leq X:\ D(n)>C_B\,n^{1/2}(\log n)^B\bigr\}
  \ll_{B,\epsilon}X(\log X)^{-\frac{1}{2}(B-\frac56)+\varepsilon}.
\]
In particular, $D(n)\ll_B n^{1/2}(\log n)^B$ for almost all $n$.
\end{theorem}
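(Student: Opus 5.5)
The plan is to bound $D(n)$ by the product of the two core-distance restrictions from the introduction, and to prove that the $3$-core factor is small for almost all $n$. The $2$-core factor is already $O(n^{1/2})$ for every $n$.

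First I would record the reduction. Every zero-free $\mu\vdash n$ has the form $(3^a,2^b,1^c)$ with $b$ even. Write $\delta_2(n)$ for the distance from $n$ to the nearest $2$-core size, which is a triangular number $k(k+1)/2$. Write $\delta_3(n)$ for the distance from $n$ to the nearest $3$-core size. The Murnaghan--Nakayama rule gives $2b\le \delta_2(n)+O(1)$ and $3a\le \delta_3(n)+O(1)$. Since $c$ is determined by $a$ and $b$,
\[
  D(n)\ll (\delta_2(n)+1)(\delta_3(n)+1).
\]
Consecutive triangular numbers near $n$ differ by about $\sqrt{2n}$, so $\delta_2(n)\ll n^{1/2}$ for every $n$. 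The theorem therefore reduces to showing that
\[
  \delta_3(n)\le C(\log n)^{B}
\]
for all $n\le X$ outside a set of size $\ll_{B,\epsilon} X(\log X)^{-\frac12(B-\frac56)+\epsilon}$. The constant $C_B$ then absorbs the implied constant from the product bound.

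Next I would translate $\delta_3$ into an arithmetic problem. The standard generating-function identity for $3$-cores says that $m$ is a $3$-core size exactly when $3m+1$ is represented by $x^2+xy+y^2$ (up to the usual normalization). Equivalently, $3m+1$ is a norm from $\Z[\omega]$: every prime $p\equiv 2\pmod 3$ divides it to an even power. Hence $\delta_3(n)>H$ forces the interval $[3n+1-3H,\,3n+1+3H]$ to contain no integer $\equiv 1\pmod 3$ whose indicator $f$ of Eisenstein norms is $1$. This $f$ is multiplicative, takes values in $\{0,1\}$, and has mean value $\asymp (\log x)^{-1/2}$ (Landau--Bernays type asymptotics). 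The congruence condition mod $3$ can be built into $f$ or handled by a character twist.

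The main step is the short-interval estimate, and I expect it to be the real obstacle. I would apply Matom\"aki--Radziwi\l\l's theorem for non-negative multiplicative functions (their variance bound for sums of $f$ over intervals of length $h$ against the long average) on dyadic ranges $x\in[X/2^{j+1},X/2^j]$, with $h=(\log X)^{B}$. Suppose an interval $[y,y+h]$ contains no Eisenstein norm. Its short sum then deviates from the expected value $\gg h(\log X)^{-1/2}$ by the full expected amount. A Chebyshev argument bounds the proportion of such $y$ by
\[
  \frac{\text{MR variance bound}}{h^2(\log X)^{-1}}.
\]
The work lies in making the Matom\"aki--Radziwi\l\l\ error term explicit for a function of density $(\log x)^{-1/2}$. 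It must be tracked as a power of $\log h/\log X$-type savings, and it should give a proportion of roughly
\[
  \bigl(h/(\log X)^{5/6}\bigr)^{-1/2}(\log X)^{\epsilon}.
\]
This would explain both the threshold $B>5/6$ and the exponent $-\tfrac12(B-\tfrac56)+\epsilon$. Verifying that the Matom\"aki--Radziwi\l\l\ hypotheses hold for $f$ (non-negativity, boundedness, and control of $f$ on primes in progressions mod $3$), and extracting this precise exponent, is where I would spend most of the effort.

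Finally, I would sum the exceptional counts over the $O(\log X)$ dyadic ranges. The ranges with $x\le X(\log X)^{-1}$ are trivially admissible, so this costs at most a factor absorbed by $\epsilon$. Replacing $\log X$ by $\log n$ in the threshold changes only constants for $n>X^{1/2}$, and the $n\le X^{1/2}$ are negligible. Combining with $\delta_2(n)\ll n^{1/2}$ gives $D(n)\le C_B n^{1/2}(\log n)^B$ off the exceptional set. Letting $X\to\infty$ gives the ``almost all'' statement.
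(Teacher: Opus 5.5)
There is a genuine gap, and it starts in the reduction. The core-removal cutoff (Lemma~\ref{lem:cutoff}) needs a $3$-core of size \emph{exactly} $n-3s$. Every $\lambda\vdash n$ has $3$-weight at least $s_3(n)=\min\{s\ge 0:\ 3(n-3s)+1\in\Lo\}$, and by Remark~\ref{rem:typeIII} the bound $a\le s_3(n)$ is all that core obstructions give. A $3$-core of size $n\pm1$ or $n-2$ says nothing about $a$, so $3a\le\delta_3(n)+O(1)$ does not follow from Murnaghan--Nakayama. (The same objection applies to $\delta_2$, but there it is harmless because $s_2(n)\ll n^{1/2}$ still holds.)

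Consequently the event you must control is not ``no norm $\equiv 1\pmod 3$ in $[N-3H,N+3H]$''. It is ``none of $N,N-9,\dots,N-9H$ lies in $\Lo$'', with $N=3n+1$: a one-sided window in a \emph{fixed residue class modulo $9$}. Your event is much smaller than this one, so bounding it does not bound the exceptional set. Moreover, the correct condition is not multiplicative. Eisenstein norms prime to $3$ are automatically $\equiv 1\pmod 3$, which is why your mod-$3$ restriction cost nothing, but they are spread over the classes $1,4,7\pmod 9$. So Matom\"aki--Radziwi\l\l's results for non-negative multiplicative functions do not apply as stated. A character twist would need short-interval cancellation for the complex-valued sparse functions $f\chi$, with $\chi$ a cubic character mod $9$, and you do not supply this.

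The missing idea is the paper's multiplicative subset $\A$: integers prime to $3$ in which primes $p\equiv 1\pmod 9$ occur to any exponent and all other primes occur to exponents divisible by $6$. Every element of $\A$ is Loeschian and $\equiv 1\pmod 9$, so $r\A\subseteq\{m\in\Lo:\ m\equiv r\pmod 9\}$ for $r\in\{1,4,7\}$. Since the only primes in $\A$ are those $\equiv 1\pmod 9$, Mertens' theorem in progressions gives $\delta(\A;X)\asymp(\log X)^{-5/6}$. This is the sole source of the $5/6$. With your $f$ of density $(\log X)^{-1/2}$, your own heuristic would predict threshold $1/2$, so the proportion $\bigl(h/(\log X)^{5/6}\bigr)^{-1/2}(\log X)^{\epsilon}$ is read off from the statement rather than derived.

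The paper also uses a different input from a variance/Chebyshev argument: the Matom\"aki--Radziwi\l\l\ gap-moment bound $\sum_{a_i\le X}(a_{i+1}-a_i)^{\gamma}\ll X\,\delta(\A;X)^{1-\gamma}$ for $1\le\gamma<3/2$. If $s_3(n)>S$, then $N$ lies in a gap $g_i>9S$ of $r\A$. The exceptional count is therefore $\ll S+\sum_{g_i>9S}g_i\le S+(9S)^{1-\gamma}\sum g_i^{\gamma}$. Taking $S=\lfloor(\log X)^B\rfloor$ and $\gamma\to 3/2$ yields exactly the exponent $-\tfrac12(B-\tfrac56)+\epsilon$.
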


The on-average exponent $1/2$ in Theorem~\ref{thm:almostall} suggests that the pointwise exponent $3/4$ in Theorem~\ref{thm:pointwise} is not optimal.  We also present a heuristic in Section~\ref{sec:heuristic} that suggests  $D(n)\ll_{\varepsilon}n^{\varepsilon}$ for all $n$.  We note, however, that $3/4=1/2+1/4$ is the natural limit of the core-obstruction method used here: the $2$-core distance is genuinely of order $n^{1/2}$ for a positive proportion of $n$ (consecutive triangular numbers of a given parity are spaced $\asymp\sqrt{n}$ apart), while any pointwise improvement of the $3$-core distance bound $s_3(n)\ll n^{1/4}$ of Lemma~\ref{lem:s3} would improve upon the classical Bambah--Chowla bound \cite{BC} for gaps between sums of two squares, an exponent that has resisted improvement since 1947.  See also Remark~\ref{rem:typeIII}.  These observations suggest that our theorems are close to the limits of their respective methods.

The paper is organized as follows.
Section~\ref{sec:cores} recalls the necessary partition theory and the
Murnaghan--Nakayama vanishing criterion for character table entries.
Section~\ref{sec:duro} proves Duro's structural reduction in the form
needed here, which involves the combinatorial properties of $2$-core
and $3$-core partitions.  Section~\ref{sec:distances} gives the
$2$-core and $3$-core distance estimates, which are then used in
Section~\ref{sec:proof1} to prove Theorem~\ref{thm:pointwise}.
Section~\ref{sec:almostall} states the Matom\"aki--Radziwi\l\l\ input
and proves Theorem~\ref{thm:almostall}.  In
Section~\ref{sec:heuristic}, we give a heuristic that supports our
conjecture that $D(n)\ll_{\varepsilon}n^{\varepsilon}$.  In the final
section we describe the role of AI and the role of AxiomProver in the
production and verification of these proofs in Lean, and we offer
remarks concerning the computation of $D(n)$.

\section*{Acknowledgements}\noindent The authors thank Jesse Thorner for comments on an earlier version of this manuscript.

\section{Partitions, cores, and a vanishing criterion}\label{sec:cores}

A partition $\lambda$ of $n$, written $\lambda\vdash n$, is a
nonincreasing sequence of positive integers
$\lambda=(\lambda_1\geq\lambda_2\geq\cdots\geq\lambda_r>0)$ with
$\sum_i\lambda_i=n$.  We identify $\lambda$ with its Young diagram.

A \emph{rim hook} of length $t$ is a connected border strip of $t$
boxes whose removal leaves the Young diagram of a partition and that
contains no $2\times 2$ square.  Figure~\ref{fig:rimhook} shows an
example.  A partition is a \emph{$t$-core} if no rim hook of length $t$
can be removed from it.  Equivalently, no hook length in its Young
diagram is divisible by $t$; see \cite[Chapter~2]{JK} or \cite{Ol}.

\begin{figure}[ht]
\centering
\begin{tikzpicture}[scale=0.55]
  \fill[black!15] (3,-1) rectangle (5,0);   
  \fill[black!15] (2,-2) rectangle (4,-1);  
  \foreach \j in {0,...,4} {\draw (\j,0) rectangle ++(1,-1);}
  \foreach \j in {0,...,3} {\draw (\j,-1) rectangle ++(1,-1);}
  \foreach \j in {0,...,1} {\draw (\j,-2) rectangle ++(1,-1);}
  \draw (0,-3) rectangle ++(1,-1);
\end{tikzpicture}
\caption{The Young diagram of $\lambda=(5,4,2,1)$ with a rim hook of
length $4$ and height $1$ shaded.  Its removal leaves the partition
$(3,2,2,1)$.}
\label{fig:rimhook}
\end{figure}

Starting from a partition $\lambda$, repeatedly removing rim hooks of
length $t$ eventually produces a $t$-core.  The final $t$-core is
independent of all choices and is denoted $\core_t(\lambda)$.  The
integer
\[
  \wt_t(\lambda):=\frac{|\lambda|-|\core_t(\lambda)|}{t}
\]
is the \emph{$t$-weight} of $\lambda$.  We record for later use the
classical fact (see \cite[\S 2.7]{JK} or \cite{Ol}) that the boxes of
$\lambda$ whose hook lengths are divisible by $t$ are in bijection with
the boxes of the $t$-quotient of $\lambda$; consequently, in the
notation of Section~\ref{sec:intro},
\begin{equation}\label{eq:hooks-weight}
  \mathcal{H}_t(\lambda)=\wt_t(\lambda).
\end{equation}

Let $\mathrm{RH}(\lambda,t)$ be the set of rim hooks in $\lambda$ of length $t$, and let $\operatorname{ht}(h)$ be the height of $h$, i.e.\ one less than its vertical span.  With this notation, the (recursive form of the) Murnaghan--Nakayama rule \cite[Theorem~2.4.7]{JK} states that if $t=\mu_i$ is the length of a chosen part of $\mu$, then
\[
  \chi^{\lambda}(\mu)
  =\sum_{h\in\mathrm{RH}(\lambda,t)}(-1)^{\operatorname{ht}(h)}
   \chi^{\lambda\setminus h}(\mu\setminus\mu_i).
\]
This allows us to compute $\chi^{\lambda}(\mu)$ recursively; note that the value of the iterated expansion is independent of the order in which the parts of $\mu$ are processed.

We shall use the following consequence of the Murnaghan--Nakayama rule.
It is the mechanism behind all the columnwise vanishing estimates in
this paper.

\begin{lemma}[Core-removal cutoff]\label{lem:cutoff}
Let $t\geq 2$, $n\geq 1$, and $s\geq 0$ be integers.  Suppose that there exists a
$t$-core partition of $n-st$.  Then every conjugacy class of $S_n$
whose cycle type contains more than $s$ parts equal to $t$ has a zero
in its column.
\end{lemma}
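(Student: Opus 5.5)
Given a $t$-core $\kappa\vdash n-st$ and a cycle type $\mu\vdash n$ with $m>s$ parts equal to $t$, we build a partition $\lambda\vdash n$ with $\chi^\lambda(\mu)=0$ by fattening $\kappa$. We take $\lambda$ to be any partition with $\core_t(\lambda)=\kappa$ and $\wt_t(\lambda)=s$. Such $\lambda$ exist, for example by adding $s$ rim hooks of length $t$ to $\kappa$ (on the abacus, slide one bead down $s$ positions on a single runner). Then $|\lambda|=n-st+st=n$, and by \eqref{eq:hooks-weight} we have $\mathcal{H}_t(\lambda)=\wt_t(\lambda)=s$.

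Next we compare with $\mu$. The parts of $\mu$ divisible by $t$ include at least $m$ parts equal to $t$. Hence $\mathcal{P}_t(\mu)\ge m>s=\mathcal{H}_t(\lambda)$, which is exactly Stanley's criterion (type III zero). So $\chi^\lambda(\mu)=0$.

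To stay self-contained, we can verify this vanishing directly via the Murnaghan--Nakayama rule, using the stated order-independence. We process first the parts of $\mu$ divisible by $t$; say they are $tk_1,\dots,tk_r$ with $\sum k_i=\mathcal{P}_t(\mu)$. Removing a rim hook of length $tk$ lowers the $t$-weight by exactly $k$ and preserves the $t$-core, since on the $t$-abacus it moves one bead up $k$ levels on its runner. Hence $\mathcal{H}_t$ drops by $k$ at each such step. Every term of the iterated expansion would require removing total length $t\mathcal{P}_t(\mu)$ in hooks of lengths divisible by $t$, which needs weight at least $\mathcal{P}_t(\mu)>s$. So at some stage no admissible rim hook exists, every term is an empty sum, and $\chi^\lambda(\mu)=0$. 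This gives a zero in the column of $\mu$.

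The only step needing care is this bookkeeping: that removing a rim hook of length divisible by $t$ reduces $\wt_t$ by length$/t$ and fixes $\core_t$, together with the existence of $\lambda$ with prescribed core and weight. Both follow from the abacus/hook correspondence cited from \cite{JK,Ol}. The case $s=0$ is consistent with this: then $\lambda=\kappa$ is a $t$-core of $n$, and a single part $t$ already forces vanishing.
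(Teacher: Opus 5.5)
Your proposal is correct and is essentially the paper's argument: build $\lambda\vdash n$ with $t$-core $\kappa$ and $t$-weight $s$ (the paper does this explicitly via $\lambda=(\kappa_1+st,\kappa_2,\ldots)$, which is your abacus slide applied to the largest bead), then expand by Murnaghan--Nakayama with the relevant parts first, so that every branch exhausts the weight and hits an empty sum. The only difference is that you process all parts divisible by $t$, which amounts to reproving Stanley's criterion. That requires the abacus fact that removing a hook of length $tk$ preserves $\core_t$ and lowers the weight by $k$, whereas the paper removes only the parts equal to $t$ and needs nothing beyond the choice-independence of $\core_t$.
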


\begin{proof}
Let $\gamma$ be a $t$-core of size $n-st$.  We first construct a
partition $\lambda\vdash n$ whose $t$-core is $\gamma$ and whose
$t$-weight is $s$.  Add $st$ boxes to the end of the first row of
$\gamma$.  In other words, if $\gamma=(\gamma_1,\gamma_2,\ldots)$, put
\[
  \lambda=(\gamma_1+st,\gamma_2,\gamma_3,\ldots).
\]
Removing $s$ horizontal rim hooks of length $t$ from the right end of
the first row returns $\gamma$.  Since $\gamma$ is a $t$-core and the
$t$-core is independent of the removal choices,
we have $\core_t(\lambda)=\gamma$ and $\wt_t(\lambda)=s$.

Now let $\mu\vdash n$ contain $r>s$ parts equal to $t$.  In applying
the Murnaghan--Nakayama rule, order the parts of $\mu$ so that these
$r$ parts equal to $t$ are removed first.  A nonzero summand would
require a sequence of $r$ successive rim-hook removals of length $t$
starting from $\lambda$.  But $\lambda$ has $t$-weight $s$, so after
at most $s$ removals of rim hooks of length $t$ one reaches its
$t$-core, where no rim hook of length $t$ remains.  Hence there is no
such removal sequence.  The Murnaghan--Nakayama sum is empty, and
therefore $\chi^{\lambda}(\mu)=0$.
\end{proof}

We also need the following standard existence theorem for cores.

\begin{proposition}[Existence of $t$-cores]\label{prop:tcore}
If $t\geq 4$, then every nonnegative integer is the size of at least
one $t$-core partition.
\end{proposition}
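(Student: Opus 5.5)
The plan is to use the generating-function characterization of $t$-cores: the number $c_t(n)$ of $t$-core partitions of $n$ satisfies
\[
  \sum_{n\geq 0}c_t(n)q^n=\prod_{k\geq 1}\frac{(1-q^{tk})^t}{1-q^k},
\]
and the claim is that $c_t(n)>0$ for all $n\geq 0$ when $t\geq 4$. This is the Granville--Ono theorem, and I would follow their argument.

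The first step is to reduce to the case of prime powers and small cases. The key combinatorial observation is monotonicity in $t$: if $t\mid t'$ then every $t$-core is a $t'$-core, since no hook length divisible by $t'$ can occur if none is divisible by $t$. Hence $c_{t}(n)>0$ implies $c_{t'}(n)>0$ whenever $t\mid t'$. Every $t\geq 4$ is divisible by $4$, by $9$, or by a prime $p\geq 5$. So it suffices to treat $t=4$, $t=9$, and primes $t=p\geq 5$. Note that $t=6$ is divisible by $3$ but not $9$; however, it is divisible by $2$, and $2$-cores do not suffice. I will therefore add $t=6$ as a separate case, or handle it via the abacus. Similarly, $t=8$ reduces to $4$, and $t=3\cdot 5$ reduces to $5$.

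For the small moduli $t=4$ and $t=6$, and for $t=9$, I would argue via theta-series identities. Using Garvan--Kim--Stanton's abacus description, $t$-cores of $n$ correspond to vectors $\vec n\in\Z^t$ with $\sum n_i=0$ and $n=\tfrac t2\|\vec n\|^2+\vec b\cdot\vec n$. This makes $c_t(n)$ a count of representations by a shifted positive-definite quadratic form in $t-1$ variables. For $t=4$ this gives, after completing the square, a ternary form, and Legendre/Gauss three-square type theorems give representability: the relation $c_4(n)>0$ comes down to $8n+5$ being a sum of three odd squares, which holds since $8n+5\not\equiv 7\pmod 8$.

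For primes $p\geq 5$, I would write the generating function as $q^{-(p^2-1)/24}\eta(pz)^p/\eta(z)$, a holomorphic modular form of weight $(p-1)/2\geq 2$ on $\Gamma_0(p)$ with character. I would then decompose it into an Eisenstein part plus a cusp form. The Eisenstein coefficients grow like $n^{(p-3)/2}$ times a positive local factor (computed explicitly via the cusp at $0$, using $\eta(pz)^p/\eta(z)$ expanded there). Deligne's bound gives cusp coefficients $\ll n^{(p-3)/4+\epsilon}$, so $c_p(n)>0$ for $n$ large. The remaining finitely many $n$ for each $p$, and the uniformity in $p$, I would handle by the divisibility trick above together with a direct check that $c_p(n)>0$ for $n<p$, which is trivial since every partition of $n<p$ is a $p$-core.

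The main obstacle is exactly this uniformity, namely making the cusp-form error effective for all primes $p\geq 5$ simultaneously. Granville--Ono dealt with it by explicit bounds on the Petersson norm, and by showing that for $p\geq 5$ positivity of the Eisenstein local factor holds for all $n$ (it vanishes only in the excluded cases $p=2,3$). Since the paper says this is a ``standard existence theorem,'' in practice I would cite Granville--Ono (Trans.\ AMS 1996) for this step rather than redo it.
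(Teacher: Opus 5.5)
Your fallback, citing Granville--Ono, is exactly what the paper does: its proof of Proposition~\ref{prop:tcore} is only a reference to \cite{GO,On}. Your preliminary reduction is also correct: a $t$-core is a $t'$-core whenever $t\mid t'$, so it suffices to treat $t\in\{4,6,9\}$ and primes $p\geq 5$.

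The sketch you present as their argument, however, contains concrete errors and would not work as a self-contained proof.

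\emph{The case $t=4$.} The integer $8n+5$ is never a sum of three odd squares, since three odd squares sum to $3\pmod 8$. The correct statement is $c_4(n)=r_3(8n+5)/24$. Any representation of $8n+5$ as a sum of three squares has one odd entry, one entry $\equiv 2\pmod 4$ and one entry $\equiv 0\pmod 4$. The corresponding product of theta series is $24\,\eta(32z)^4/\eta(8z)$. Legendre's theorem then gives $r_3(8n+5)>0$.

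\emph{The prime case.} For primes $p$, comparing the Eisenstein part with Deligne's bound only gives $c_p(n)>0$ once $n$ exceeds a threshold. That threshold depends on $p$ through the unquantified size of the cusp component. Neither of your patches reaches it. The divisibility trick only transfers positivity from $t$ to its multiples, so it says nothing about a prime $p$. The trivial range $n<p$ does not cover the values $p\leq n$ below the threshold.

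Granville--Ono do not resolve this uniformity problem with Petersson-norm estimates; they avoid it. For $t\geq 17$ they use the Garvan--Kim--Stanton vector description to reduce positivity to Lagrange's four-square theorem, an elementary argument that is uniform in $t$. Modular forms and Deligne's bound enter only for the single case $t=13$, since the other $t<17$ had already been settled. There only one fixed level is involved, and the remaining small $n$ can be checked directly.

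So your proposal is acceptable as a citation, matching the paper. As written, though, its prime case has a genuine gap at exactly the uniformity step you flagged.
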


\begin{proof}
This is the standard positivity theorem for $t$-core partitions when
$t\geq 4$.  It follows from the work of Granville--Ono; see
\cite{GO,On}.
\end{proof}

\section{Duro's reduction to 2- and 3-cycles}\label{sec:duro}

We next prove the structural reduction for zero-free columns.  This is
Duro's Proposition~4.15, with the short proof included for
completeness.

\begin{proposition}[Duro's reduction]\label{prop:duro}
Let $n\geq 3$, and let $\mu\vdash n$ be a partition whose column in the
character table is zero-free.  Then
\[
  \mu=(3^a,2^b,1^c)
\]
for some $a,b,c\geq 0$, and $b$ is even.
\end{proposition}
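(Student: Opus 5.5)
We argue by contraposition, in two steps. First we show that a zero-free column has no part of size $t\geq 4$. Then we show that the number $b$ of parts equal to $2$ must be even. The tool throughout is Lemma~\ref{lem:cutoff}, taken with $s=0$ in the first step.

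\textbf{Step 1: no parts of size at least $4$.} Suppose $\mu\vdash n$ has a part equal to some $t\geq 4$. By Proposition~\ref{prop:tcore}, there is a $t$-core partition $\gamma$ of $n=n-0\cdot t$. Lemma~\ref{lem:cutoff} with $s=0$ then says that every class with more than $0$ parts equal to $t$ has a zero in its column. Concretely, the Murnaghan--Nakayama expansion of $\chi^{\gamma}(\mu)$, removing the part $t$ first, is an empty sum. Hence a zero-free $\mu$ has all parts in $\{1,2,3\}$, so $\mu=(3^a,2^b,1^c)$.

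\textbf{Step 2: $b$ is even.} We look for a single $\lambda\vdash n$ with $\chi^\lambda(\mu)=0$ whenever $b$ is odd. The natural candidate is the sign-twisting idea: $\chi^{\lambda'}(\mu)=\operatorname{sgn}(\mu)\chi^{\lambda}(\mu)$, and $\operatorname{sgn}(\mu)=(-1)^{b}$ because $3$-cycles are even. So if some $\lambda\vdash n$ is self-conjugate, then for odd $b$ we get $\chi^\lambda(\mu)=-\chi^\lambda(\mu)$, hence $\chi^\lambda(\mu)=0$. Self-conjugate partitions of $n$ exist for every $n\geq 3$ except $n=2$: they correspond to partitions of $n$ into distinct odd parts (the diagonal hooks), and every $n\neq 2$ has such a partition. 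Indeed, $n$ odd gives $(n)$, and $n\geq 4$ even gives $(n-1,1)$. This is why the hypothesis $n\geq 3$ is needed. With this in hand, every zero-free column has $b$ even.

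\textbf{Main obstacle.} The potential difficulty is in Step 2: the core-cutoff lemma alone does not obviously give parity information about $b$, since $2$-cores exist only at triangular sizes. The self-conjugate trick avoids this, so the only real check is the existence of a self-conjugate partition of every $n\geq 3$. If one insisted on a Murnaghan--Nakayama-only argument, one could instead remove the $2$-parts first from a self-conjugate $\lambda$ and pair off domino tableaux with their transposes, whose heights differ in parity by $b$. The character identity $\chi^{\lambda'}=\operatorname{sgn}\cdot\chi^\lambda$ (see \cite{JK}) is the cleaner route, and it is the one we would take.
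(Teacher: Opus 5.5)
Your proposal is correct and follows the paper's proof essentially step for step: Step 1 is Proposition~\ref{prop:tcore} combined with Lemma~\ref{lem:cutoff} at $s=0$, and Step 2 is the same self-conjugate/sign-twist argument $\chi^{\lambda'}=\mathrm{sgn}\cdot\chi^{\lambda}$. The only cosmetic difference is that you produce self-conjugate partitions via their diagonal hook lengths $(n)$ and $(n-1,1)$, which correspond exactly to the paper's explicit choices $(m+1,1^m)$ and $(m,2,1^{m-2})$.
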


\begin{proof}
Suppose first that $\mu$ has a part $t\geq 4$.  By
Proposition~\ref{prop:tcore}, there is a $t$-core partition of $n$.
Lemma~\ref{lem:cutoff}, applied with $s=0$, shows that every conjugacy
class containing a part of size $t$ has a zero in its column.  This
contradicts the assumption that $\mu$ is zero-free.  Hence, every part
of $\mu$ is $1$, $2$, or $3$.

It remains to prove that the number $b$ of parts equal to $2$ is even.
A permutation of cycle type $(3^a,2^b,1^c)$ has sign $(-1)^b$, since a
$2$-cycle is odd while a $3$-cycle is even.  Thus if $b$ is odd, the
class is an odd conjugacy class.

For every $n\geq 3$, there exists a self-conjugate partition of $n$.
If $n=2m+1$ is odd, take $(m+1,1^m)$.  If $n=2m$ is even and
$m\geq 2$, take $(m,2,1^{m-2})$; for $m=2$ this is $(2,2)$.  These
partitions are equal to their transposes.

Let $\lambda$ be self-conjugate.  The standard identity
$\chi^{\lambda'}=\chi^{\lambda}\otimes\mathrm{sgn}$ therefore gives
$\chi^{\lambda}=\chi^{\lambda}\otimes\mathrm{sgn}$.  Evaluating at an
odd permutation $g$ yields
\[
  \chi^{\lambda}(g)=\mathrm{sgn}(g)\chi^{\lambda}(g)
  =-\chi^{\lambda}(g),
\]
so $\chi^{\lambda}(g)=0$.  Hence an odd conjugacy class cannot be
zero-free.  Therefore $b$ is even.
\end{proof}

\begin{remark}\label{rem:n2}
The case $n=2$ is exceptional for the parity argument: the
transposition column of $S_2$ is zero-free.  This has no asymptotic
significance, and all estimates below include small $n$ after enlarging
the absolute implied constants.
\end{remark}

\section{The 2-core and 3-core distances}\label{sec:distances}

Proposition~\ref{prop:duro} reduces the problem to bounding the
possible exponents $a$ and $b$ in a zero-free cycle type
$(3^a,2^b,1^c)$.  We do this by measuring how far $n$ is from a
$2$-core size and a $3$-core size.

\subsection{The 2-core distance}

The $2$-cores are especially simple.

\begin{lemma}\label{lem:2core}
A nonnegative integer $m$ is the size of a $2$-core partition if and
only if $m=u(u+1)/2$ for some integer $u\geq 0$.
\end{lemma}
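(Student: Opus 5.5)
We characterize $2$-cores directly as staircases and then compute their sizes. The claim is that a partition $\lambda$ is a $2$-core if and only if $\lambda=(u,u-1,\ldots,2,1)$ for some $u\geq 0$, where $u=0$ gives the empty partition. Once this is established, the lemma follows at once, since $|(u,u-1,\ldots,1)|=1+2+\cdots+u=u(u+1)/2$. Every triangular number is then realized, and no other integer is.

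For the ``if'' direction, we show that the staircase $\delta_u=(u,u-1,\ldots,1)$ is a $2$-core by computing its hook lengths. The box in row $i$ and column $j$ (with $i+j\leq u+1$) has arm length $u+1-i-j$, since row $i$ has length $u+1-i$. By the symmetry of $\delta_u$, its leg length is also $u+1-i-j$. The hook length is therefore $2(u+1-i-j)+1$, which is odd. No hook length is divisible by $2$, so $\delta_u$ is a $2$-core by the hook-length characterization recalled in Section~\ref{sec:cores}.

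For the ``only if'' direction, let $\lambda=(\lambda_1\geq\cdots\geq\lambda_r>0)$ be a $2$-core. We show that consecutive nonzero parts differ by exactly $1$ and that $\lambda_r=1$; together these force $\lambda=\delta_r$.
\begin{itemize}
\item If $\lambda_i=\lambda_{i+1}$ for some $i<r$, then the two boxes in column $\lambda_i$ and rows $i,i+1$ form a vertical domino. Both boxes lie on the rim, and removing the domino leaves a partition: row $i$ drops to $\lambda_i-1\geq\lambda_{i+1}-1$, and row $i+2$ satisfies $\lambda_{i+2}\leq\lambda_i-1$, since otherwise we would use rows $i+1,i+2$ instead. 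To avoid this case analysis, it is cleaner to use hooks directly: for the box $(i,\lambda_i)$, take $i$ maximal among the rows of equal length $\lambda_i$ minus one. Its arm is $0$ and its leg is $1$, so the hook length is $2$, a contradiction.
\item If $\lambda_i\geq\lambda_{i+1}+2$, then the box $(i,\lambda_i-1)$ has arm $1$ and leg $0$, so its hook length is $2$, again a contradiction.
\item If $\lambda_r\geq 2$, then the box $(r,\lambda_r-1)$ has hook length $2$, also a contradiction.
\end{itemize}
Hence the parts are $r,r-1,\ldots,1$.

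The only real care needed is the indexing in the hook-length computations, in particular choosing the correct box so that its hook length is exactly $2$. This is routine.
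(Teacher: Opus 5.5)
Your proof is correct and is essentially the paper's proof. Both arguments show that a $2$-core must be a staircase by finding a box of hook length $2$: the box $(i,\lambda_i-1)$ when some row drops by at least $2$ (the paper covers your third bullet by setting $\lambda_{r+1}=0$), and the last box of the second-to-last row of a maximal block of equal rows. Both then check the converse using the hook lengths $2(u+1-i-j)+1$.

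In a write-up, delete the abandoned domino-removal sentences in your first bullet and state the box choice cleanly: let $k$ be the last row of length $\lambda_i$ and use the box $(k-1,\lambda_i)$. As currently written, $i$ is defined in terms of itself.
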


\begin{proof}
We use the equivalent hook-length definition of a $2$-core.  Suppose
that $\lambda=(\lambda_1,\lambda_2,\ldots,\lambda_r)$ is a $2$-core,
and set $\lambda_{r+1}=0$.  First, if $\lambda_i-\lambda_{i+1}\geq 2$
for some $i$, then the box $(i,\lambda_i-1)$ has exactly one box to its
right and no box below it.  Its hook length is therefore $2$, which is
impossible in a $2$-core.  Hence $\lambda_i-\lambda_{i+1}\leq 1$ for
every $i$.

Second, no two consecutive row lengths can be equal.  Indeed, suppose a
maximal block of equal row lengths has at least two rows, and let rows
$j-1$ and $j$ be the last two rows in that block.  The box at the end
of row $j-1$ has exactly one box below it and no box to its right, so
its hook length is $2$.  This is impossible in a $2$-core.  Thus all
consecutive row lengths differ by exactly $1$, and the partition has
staircase shape $(r,r-1,\ldots,2,1)$.

Conversely, in the staircase $(r,r-1,\ldots,1)$ every hook length is
odd: the hook at position $(i,j)$ has length $2(r-i-j+1)+1$.  Thus no
hook length is divisible by $2$, and the staircase is a $2$-core.  Its
size is $r(r+1)/2$.
\end{proof}

\begin{definition}\label{def:s2}
For $n\geq 1$, define
\[
  s_2(n):=\min\Bigl\{s\geq 0:\ n-2s=\frac{u(u+1)}{2}
  \text{ for some integer } u\geq 0\Bigr\}.
\]
\end{definition}

\begin{lemma}\label{lem:s2}
For all $n\geq 1$, we have $s_2(n)\ll n^{1/2}$.
\end{lemma}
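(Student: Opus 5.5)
We need to find $s \ll \sqrt n$ such that $n-2s$ is a triangular number. Equivalently, we want a triangular number $T_u=u(u+1)/2\le n$ with $T_u\equiv n \pmod 2$ and $n-T_u\ll\sqrt n$. Then $s=(n-T_u)/2$ is admissible in Definition~\ref{def:s2}, so $s_2(n)\le s\ll n^{1/2}$.

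We use the parity pattern of triangular numbers. $T_u$ is even exactly when $u\equiv 0,3 \pmod 4$, and odd exactly when $u\equiv 1,2\pmod 4$. So among any four consecutive indices $u$, both parities occur. More precisely, within any block $\{u-3,u-2,u-1,u\}$ there is an index $v$ with $T_v\equiv n\pmod 2$.

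Now fix $n\ge 1$ and let $u\ge 0$ be the largest integer with $T_u\le n$. Then $n<T_{u+1}=T_u+u+1$, and $u\le\sqrt{2n}$.

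Assume first that $u\ge 3$. Choose $v\in\{u-3,u-2,u-1,u\}$ with $T_v\equiv n\pmod 2$. Then $T_v\le T_u\le n$, and
\[
  n-T_v < T_{u+1}-T_{u-3} = u+(u-1)+(u-2)+(u+1) = 4u-2 \le 4\sqrt{2n}.
\]
So $n-T_v$ is even and nonnegative, and $s:=(n-T_v)/2<2\sqrt{2n}$.

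If $u\le 2$, then $n<T_3=6$. These finitely many $n$ are handled directly, using $T_0=0$ (even) and $T_1=1$ (odd): take $s=n/2$ or $s=(n-1)/2$, both of which are at most $3$.

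Combining the two cases gives $s_2(n)\le 2\sqrt{2n}+3\ll n^{1/2}$ for all $n\ge 1$. The only point needing care is the parity step: we need a triangular number of the correct parity, not merely the nearest one. The mod-$4$ periodicity handles this at the cost of a constant factor.
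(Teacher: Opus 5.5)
Your proof is correct, and it is essentially the paper's argument in different coordinates. The paper writes $8(n-2s)+1=q^2$ with $q=2u+1$ and notes that the admissible odd $q$ (those with $q^2\equiv 8n+1 \pmod{16}$) lie in two residue classes mod $8$ with gaps at most $6$; this is exactly your observation that the parity of $T_u$ depends only on $u \bmod 4$ and that both parities occur in any short window of consecutive indices, and both arguments then take the largest admissible triangular number not exceeding $n$ and bound the deficit by $O(\sqrt{n})$.
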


\begin{proof}
The finitely many cases $n<5$ are immediate.  Assume $n\geq 5$ and put
$N=8n+1$.  The condition that $n-2s$ be triangular is equivalent to
$8(n-2s)+1=(2u+1)^2$ for some $u\geq 0$.  Equivalently, there exists an odd positive integer $q$ such that $N-16s=q^2$.

The integer $N=8n+1$ is congruent to either $1$ or $9$ modulo $16$.
The odd square classes modulo $16$ are exactly $1$ and $9$.  Therefore
the admissible positive odd integers $q$ satisfying
$q^2\equiv N\pmod{16}$ lie in two residue classes modulo $8$, and the
gaps between consecutive admissible $q$'s are at most $6$.  Hence we
may choose such a $q$ with
\[
\sqrt{N}-6<q\leq\sqrt{N}.
\]
Set $s=(N-q^2)/16$.  The congruence makes $s$ a nonnegative integer,
and $N-16s=q^2$ implies that $n-2s$ is triangular.  Thus
$s_2(n)\leq s$, and
\[
  s_2(n)\leq\frac{N-(\sqrt{N}-6)^2}{16}
  =\frac{12\sqrt{N}-36}{16}\ll\sqrt{n}. \qedhere
\]
\end{proof}

\begin{corollary}\label{cor:bbound}
If $\mu=(3^a,2^b,1^c)\vdash n$ is a partition whose column in the
character table is zero-free, then $b\leq s_2(n)$.  In particular, we
have $b\ll n^{1/2}$.
\end{corollary}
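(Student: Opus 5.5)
The plan is to combine Lemma~\ref{lem:cutoff} with Lemma~\ref{lem:2core}, taking $t=2$ and $s=s_2(n)$, and then bound $s_2(n)$ by Lemma~\ref{lem:s2}.

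First, I check that a $2$-core of the right size exists. By Definition~\ref{def:s2}, with $s=s_2(n)$ there is an integer $u\geq 0$ such that
\[
  n-2s=\frac{u(u+1)}{2}.
\]
The minimum in Definition~\ref{def:s2} is over a nonempty set: Lemma~\ref{lem:s2} exhibits an admissible $s$ for every $n\geq 1$, and for the small cases $n<5$ one can simply check directly. By Lemma~\ref{lem:2core}, the integer $n-2s$ is therefore the size of a $2$-core partition, namely the staircase $(u,u-1,\ldots,1)$.

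Next, I apply Lemma~\ref{lem:cutoff} with $t=2$ and $s=s_2(n)$. It says that every conjugacy class of $S_n$ whose cycle type has more than $s_2(n)$ parts equal to $2$ has a zero in its column. Now let $\mu=(3^a,2^b,1^c)$ be zero-free. Its number of $2$-parts is exactly $b$, so we cannot have $b>s_2(n)$. Hence $b\leq s_2(n)$.

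Finally, Lemma~\ref{lem:s2} gives $s_2(n)\ll n^{1/2}$, and so $b\ll n^{1/2}$.

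There is no real obstacle here; the only thing to watch is that the $2$-core used is nonempty-or-empty as appropriate. The case $u=0$ is allowed: the empty partition is a $2$-core of size $0$, and Lemma~\ref{lem:cutoff} only requires $n\geq 1$. The statement does not need Proposition~\ref{prop:duro}, since the shape of $\mu$ is part of the hypothesis and the argument never uses the parity of $b$.
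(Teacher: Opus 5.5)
Your proposal is correct and matches the paper's proof. Both get a $2$-core of size $n-2s_2(n)$ from Lemma~\ref{lem:2core} and the definition of $s_2(n)$, apply Lemma~\ref{lem:cutoff} with $t=2$ and $s=s_2(n)$ to rule out $b>s_2(n)$, and then take the estimate $b\ll n^{1/2}$ from Lemma~\ref{lem:s2}.
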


\begin{proof}
By Lemma~\ref{lem:2core} and the definition of $s_2(n)$, there is a
$2$-core partition of $n-2s_2(n)$.  If $b>s_2(n)$, then
Lemma~\ref{lem:cutoff}, with $t=2$ and $s=s_2(n)$, gives a partition
$\lambda\vdash n$ such that $\chi^{\lambda}(\mu)=0$.  This contradicts
the assumption that $\mu$ is zero-free.  Therefore $b\leq s_2(n)$, and
the final estimate follows from Lemma~\ref{lem:s2}.
\end{proof}

\subsection{The 3-core distance}

Let
\begin{equation}
\label{eqn:Lo_def}
  \Lo:=\{x^2+xy+y^2:\ x,y\in\Z\}\cap\N
\end{equation}
be the set of positive Loeschian numbers.  Equivalently, $\Lo$ is the
set of positive norms from the Eisenstein integers.

\begin{lemma}[The 3-core criterion]\label{lem:3core}
For $m\geq 0$, there exists a $3$-core partition of $m$ if and only if
$3m+1\in\Lo$.
\end{lemma}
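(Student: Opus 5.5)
We prove Lemma~\ref{lem:3core} by parametrizing $3$-cores through abacus (bead) configurations on three runners. This yields an explicit quadratic form for their sizes, which we then identify with the Eisenstein norm form.

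\textbf{Step 1: the size formula.} Record $\lambda$ by its set of first-column hook lengths, that is, by a set of beads, and place the beads on a $3$-runner abacus. The partition $\lambda$ is a $3$-core exactly when each runner is ``flush'', with all beads pushed as far up as possible. Equivalently, by the standard charge/vector description (see \cite{GO} or Garvan--Kim--Stanton), the $3$-cores correspond bijectively to integer vectors $(n_0,n_1,n_2)$ with $n_0+n_1+n_2=0$, and
\[
  |\lambda| = \frac{3}{2}\,(n_0^2+n_1^2+n_2^2) + n_1 + 2n_2 .
\]
We take this formula from the literature, or rederive it by counting the beads removed relative to the empty-partition configuration. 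Substituting $n_0=-n_1-n_2$ reduces the size to a binary quadratic expression in $(n_1,n_2)\in\Z^2$. The point is that every pair $(n_1,n_2)$ is realized by some $3$-core.

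\textbf{Step 2: completing the square.} Compute $3|\lambda|+1$ in terms of $(n_1,n_2)$. We expect it to equal $X^2+XY+Y^2$ for linear forms $X,Y$ in $n_1,n_2$ with constant shifts. For example, with $x=3n_1+1$ and $y=3n_2+2$ (or a similar choice), a short expansion should confirm the identity. This gives the forward direction: if $m$ is the size of a $3$-core, then $3m+1\in\Lo$. As a check, $m=0$ corresponds to $(n_1,n_2)=(0,0)$, and $3\cdot 0+1=1=x^2+xy+y^2$ with $(x,y)=(1,0)$ up to the normalization chosen.

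\textbf{Step 3: the converse, which is the main obstacle.} Suppose $3m+1=x^2+xy+y^2$ with $x,y\in\Z$. We must show that $(x,y)$ can be moved, by a unit of $\Z[\omega]$ (that is, by an automorphism of the form), into the image of the affine map $(n_1,n_2)\mapsto(X,Y)$ from Step 2. That image is a specific congruence class modulo $3$, such as $x\equiv 1$ and $y\equiv 2\pmod 3$.

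The key arithmetic fact is that $x^2+xy+y^2\equiv (x-y)^2 \pmod 3$. Since $3m+1\equiv 1\pmod 3$, we get $x\not\equiv y\pmod 3$. Hence $x+y\omega$ is prime to $1-\omega$, so it is congruent modulo $(1-\omega)$ to a unit. Multiplying by one of the six units $\pm1,\pm\omega,\pm\omega^2$ then places $x+y\omega$ in the class $\equiv 1 \pmod 3$ in $\Z[\omega]$: the units represent all six nonzero classes of $\Z[\omega]/3$ whose reduction mod $(1-\omega)$ is nonzero. This is the standard ``primary'' normalization. It remains to check that this normalized class matches the residue pattern of $(X,Y)$ from Step 2, possibly after swapping $x$ and $y$. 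Once it does, we solve for integer $(n_1,n_2)$, take $n_0=-n_1-n_2$, and the corresponding flush abacus is a $3$-core of size $m$.

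The only delicate part is the bookkeeping in Steps 2 and 3: getting the affine change of variables right so that the congruence class produced by the units is exactly the image class. We will handle it by directly checking the six unit multiples modulo $3$.
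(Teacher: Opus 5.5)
Your route is correct and differs from the paper's. The paper quotes the divisor-sum formula $c_3(m)=\sum_{d\mid 3m+1}\left(\frac{d}{3}\right)$ from Granville--Ono. It reads off from the multiplicative local factors that $c_3(m)>0$ if and only if every prime $p\equiv 2\pmod 3$ divides $3m+1$ to even order. It then invokes the prime-factorization criterion for $x^2+xy+y^2$. That is short, but it rests on the quoted formula and on unique factorization in $\Z[\omega]$.

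Your argument needs neither: only the Garvan--Kim--Stanton bijection and arithmetic in $\Z[\omega]/3$. It also gives more. The six units form a complete residue system for $(\Z[\omega]/3)^{\times}$, so the normalizing unit is unique. Hence $c_3(m)=\frac16\#\{(x,y)\in\Z^2:\ x^2+xy+y^2=3m+1\}$, and together with the classical representation count $6\sum_{d\mid N}\left(\frac d3\right)$ this recovers the formula the paper quotes.

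The bookkeeping you deferred needs correcting, though.

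\textbf{The substitution.} Your sample choice $x=3n_1+1$, $y=3n_2+2$ fails: at $(n_1,n_2)=(0,0)$ it gives $7$, not $1$. After eliminating $n_0$ one has $|\lambda|=3(n_1^2+n_1n_2+n_2^2)+n_1+2n_2$, and matching coefficients forces
\[
  3|\lambda|+1=(3n_1)^2+(3n_1)(3n_2+1)+(3n_2+1)^2 .
\]
So the image is the class $x\equiv 0$, $y\equiv 1\pmod 3$, not $x\equiv 1$, $y\equiv 2$.

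\textbf{The Eisenstein element.} $x+y\omega$ has norm $x^2-xy+y^2$; use $\alpha=x-y\omega$, whose norm is $x^2+xy+y^2$. The image class corresponds to $\alpha\equiv-\omega\pmod 3$, not $\alpha\equiv 1$. Since the units hit every invertible class, simply choose $u$ with $u\alpha\equiv-\omega\pmod 3$. Writing $u\alpha=x'-y'\omega$ gives $x'\equiv 0$, $y'\equiv 1\pmod 3$ with the same norm, and no swap of $x$ and $y$ is needed. Then $n_1=x'/3$, $n_2=(y'-1)/3$, $n_0=-n_1-n_2$ yields a $3$-core of size $m$.
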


\begin{proof}
Let $c_3(m)$ denote the number of $3$-core partitions of $m$.  The
classical formula is
\[
  c_3(m)=\sum_{d\mid 3m+1}\left(\frac{d}{3}\right),
\]
where $(\tfrac{\cdot}{3})$ is the quadratic character modulo $3$; see
\cite{GO}.  Since $3m+1\equiv 1\pmod 3$, the integer $3m+1$ is not
divisible by $3$.  The divisor sum is multiplicative as a function of
$3m+1$.  Its local factor at a prime $p\equiv 1\pmod 3$ is $e+1>0$,
where $e=v_p(3m+1)$.  Its local factor at a prime $p\equiv 2\pmod 3$ is
$1-1+1-\cdots+(-1)^e$, which is positive if $e$ is even and is zero if
$e$ is odd.  Hence $c_3(m)>0$ if and only if every prime
$p\equiv 2\pmod 3$ divides $3m+1$ to even exponent.

The same parity condition on primes $p\equiv 2\pmod 3$ is the standard
prime-factorization criterion for representation by the Eisenstein norm
form $x^2+xy+y^2$; see, for example, \cite{Ma}.  Therefore $c_3(m)>0$
if and only if $3m+1\in\Lo$.
\end{proof}

\begin{definition}\label{def:s3}
For $n\geq 1$, define
\[
  s_3(n):=\min\{s\geq 0:\ n-3s\geq 0\text{ and }n-3s
  \text{ has a $3$-core partition}\}.
\]
Equivalently, by Lemma~\ref{lem:3core}, we have
$s_3(n)=\min\{s\geq 0:\ 3(n-3s)+1\in\Lo\}$.
\end{definition}

\begin{lemma}\label{lem:s3}
For all $n\geq 1$, we have $s_3(n)\ll n^{1/4}$.
\end{lemma}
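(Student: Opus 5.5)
The plan is to produce, for each $n$, an explicit element of $\Lo$ lying just below $N:=3n+1$ and congruent to $N$ modulo $9$. This is a Bambah--Chowla-type two-step greedy approximation, adapted to the Eisenstein norm form. The key observation is that $\Lo$ contains every positive integer of the form $a^2+3b^2$, because
\[
(a-b)^2+(a-b)(2b)+(2b)^2=a^2+3b^2 .
\]
So it suffices to find integers $a,b$ with $M:=a^2+3b^2>0$, $M\le N$, $M\equiv N\pmod 9$ and $N-M\ll N^{1/4}$. Then $s:=(N-M)/9$ is a nonnegative integer with $3(n-3s)+1=M\in\Lo$, and Definition~\ref{def:s3} gives $s_3(n)\le s\ll n^{1/4}$. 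We also have $n-3s\ge 0$, since $M\ge 1$ forces $9s\le N-1=3n$.

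First I handle the congruence. Since $N\equiv 1\pmod 3$, we have $N\equiv 1,4,7\pmod 9$. Each of these residues is a square modulo $9$, namely of $1$, $2$ and $4$ respectively. So I fix a residue $a_0\bmod 9$ with $a_0^2\equiv N\pmod 9$, and I insist that $a\equiv a_0\pmod 9$ and $b\equiv 0\pmod 3$. Then $3b^2\equiv 0\pmod{27}$, and hence $M\equiv a^2\equiv N\pmod 9$ automatically.

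Next come the two greedy steps.
\begin{enumerate}
\item Choose the largest $a\ge 1$ with $a\equiv a_0\pmod 9$ and $a^2\le N$. Consecutive admissible values of $a$ differ by $9$, so $a>\sqrt N-9$. Hence $r:=N-a^2$ satisfies $0\le r\le 18\sqrt N$.
\item Write $b=3c$ and choose the largest $c\ge 0$ with $27c^2\le r$. Then $27(c+1)^2>r$, so
\[
r-27c^2<27(2c+1)\ll \sqrt r+1\ll N^{1/4}.
\]
\end{enumerate}
Thus $N-M=r-27c^2\ll N^{1/4}\ll n^{1/4}$. The finitely many small $n$ for which step~(1) could fail to produce $a\ge 1$ are absorbed into the implied constant, since $s_3(n)$ is finite for each $n$ (for example $s=\lfloor n/3\rfloor$ leaves $n-3s\in\{0,1,2\}$, all of which have $3$-cores).

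The only real obstacle is the congruence bookkeeping: Definition~\ref{def:s3} moves $3(n-3s)+1$ in steps of $9$, not $1$, so a plain Bambah--Chowla argument is not enough. The restriction $b\equiv 0\pmod 3$ together with the fact that $N$ is a square modulo $9$ resolves this at the cost of constant factors only. The rest is the routine two-step square approximation, where the second step is exactly what produces the exponent $1/4$ from the exponent $1/2$ of the first.
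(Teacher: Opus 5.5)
Your proof is correct and is essentially the paper's argument. Both approximate $N=3n+1$ from below by $A^2+27y^2$ with $A^2\equiv N\pmod 9$, using the identity $A^2+27y^2=(A-3y)^2+(A-3y)(6y)+(6y)^2$; the paper instead places $A$ in the interval $\bigl[\sqrt{N-27T^2},\sqrt{N}\bigr]$ with $T=\lfloor N^{1/4}\rfloor$, whereas your choice of the largest admissible $a\le\sqrt{N}$ (giving $N-a^2\le 18\sqrt{N}$) is slightly cleaner and avoids the interval-length computation that needs $N\ge 2401$.
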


\begin{proof}
It is enough to prove the estimate for all sufficiently large $n$,
since finitely many small values can be absorbed into the implied
constant.  Put $N=3n+1$.  Then $N\equiv 1,4,$ or $7\pmod 9$.  Choose
$r\in\{1,2,4\}$ such that $r^2\equiv N\pmod 9$.  Let
$T=\lfloor N^{1/4}\rfloor$.

Assume first that $N\geq 2401$.  Then $N-27T^2>0$, so the interval
\[
  I=\bigl[\sqrt{N-27T^2},\ \sqrt{N}\bigr]
\]
is well-defined.  Its length is greater than $9$.  Indeed, since
$T\geq N^{1/4}-1$, we have $T^2\geq\sqrt{N}-2N^{1/4}+1$.  Thus
\[
  \sqrt{N}-\sqrt{N-27T^2}
  =\frac{27T^2}{\sqrt{N}+\sqrt{N-27T^2}}
  \geq\frac{27(\sqrt{N}-2N^{1/4}+1)}{2\sqrt{N}}.
\]
Writing $x=N^{1/4}$, the last expression is
\[
  \frac{27}{2}-\frac{27}{x}+\frac{27}{2x^2},
\]
which is increasing for $x>1$ and is greater than $9$ at $x=7$.  Since
$N\geq 2401=7^4$, the claim follows.

Every interval of length greater than $9$ contains at least nine
consecutive integers, hence an integer in any prescribed residue class
modulo $9$.  Therefore $I$ contains an integer $A$ satisfying
$A\equiv r\pmod 9$.  Then
\[
  N-27T^2\leq A^2\leq N,\qquad A^2\equiv N\pmod 9.
\]
Define
\[
  u=\sqrt{\frac{N-A^2}{27}},\qquad y=\lfloor u\rfloor,
\]
and set $M=A^2+27y^2$.  Since $0\leq u\leq T$, we have $M\leq N$.
Moreover,
\[
  N-M=27(u^2-y^2)=27(u-y)(u+y)<27(2T+1)\ll N^{1/4}.
\]
Also $M\equiv A^2\equiv N\pmod 9$.  Finally, $M$ is Loeschian, because
\[
  M=A^2+27y^2=(A-3y)^2+(A-3y)(6y)+(6y)^2.
\]

Since $M\leq N$ and $M\equiv N\pmod 9$, there is an integer $s\geq 0$
such that $M=N-9s$.  The estimate above gives $s\ll N^{1/4}$.  But
\[
  M=N-9s=3(n-3s)+1\in\Lo.
\]
By Lemma~\ref{lem:3core}, $n-3s$ has a $3$-core partition.  Thus $s$ is
admissible in the definition of $s_3(n)$, and
\[
  s_3(n)\leq s\ll N^{1/4}\ll n^{1/4}
\]
for all $N\geq 2401$.  For the remaining finitely many $N<2401$, the
estimate follows after increasing the absolute implied constant.  This
proves the lemma.
\end{proof}

\begin{corollary}\label{cor:abound}
If $\mu=(3^a,2^b,1^c)\vdash n$ is a partition whose column in the
character table is zero-free, then $a\leq s_3(n)$.  In particular, we
have $a\ll n^{1/4}$.
\end{corollary}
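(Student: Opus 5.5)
The plan is to mirror the proof of Corollary~\ref{cor:bbound}, replacing the $2$-core input by the $3$-core input. By Definition~\ref{def:s3}, the integer $s:=s_3(n)$ satisfies $n-3s\geq 0$, and $n-3s$ is the size of some $3$-core partition $\gamma$. We then apply Lemma~\ref{lem:cutoff} with $t=3$ and this $s$. That lemma produces the explicit partition $\lambda=(\gamma_1+3s,\gamma_2,\ldots)\vdash n$, which has $3$-core $\gamma$ and $3$-weight exactly $s$. It also shows that $\chi^{\lambda}(\mu)=0$ for every $\mu\vdash n$ having more than $s$ parts equal to $3$.

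We argue by contradiction. Suppose $\mu=(3^a,2^b,1^c)$ has a zero-free column and $a>s_3(n)$. Then $\mu$ contains more than $s_3(n)$ parts equal to $3$. Lemma~\ref{lem:cutoff} therefore gives some $\lambda\vdash n$ with $\chi^{\lambda}(\mu)=0$, which contradicts zero-freeness. Hence $a\leq s_3(n)$.

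The quantitative statement $a\ll n^{1/4}$ then follows at once from Lemma~\ref{lem:s3}. Note that $s_3(n)$ is well defined: $n-3s$ has a $3$-core partition for some $s$ (Lemma~\ref{lem:s3} even bounds the least such $s$), so the minimum in Definition~\ref{def:s3} is over a nonempty set.

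No step here is a real obstacle, since all the substance lies in Lemmas~\ref{lem:cutoff}, \ref{lem:3core} and~\ref{lem:s3}. The only point to check is that the hypothesis of Lemma~\ref{lem:cutoff}, namely that a $3$-core partition of $n-3s$ exists, is exactly what the definition of $s_3(n)$ supplies. It does, with the nonnegativity condition $n-3s\geq 0$ built into that definition.
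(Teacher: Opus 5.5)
Your proof is correct and matches the paper's: you take $s=s_3(n)$, use the definition to get a $3$-core of $n-3s$, apply Lemma~\ref{lem:cutoff} with $t=3$ to contradict zero-freeness when $a>s_3(n)$, and then invoke Lemma~\ref{lem:s3} for $a\ll n^{1/4}$. On well-definedness, citing Lemma~\ref{lem:s3} is slightly circular for small $n$; it is cleaner to note that $s=\lfloor n/3\rfloor$ is always admissible, since $0$, $1$, $2$ are sizes of the $3$-cores $\emptyset$, $(1)$, $(2)$.
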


\begin{proof}
By the definition of $s_3(n)$, there is a $3$-core partition of
$n-3s_3(n)$.  If $a>s_3(n)$, then Lemma~\ref{lem:cutoff}, with $t=3$
and $s=s_3(n)$, gives a partition $\lambda\vdash n$ such that
$\chi^{\lambda}(\mu)=0$.  This contradicts the assumption that $\mu$ is
zero-free.  Therefore $a\leq s_3(n)$, and the final estimate follows
from Lemma~\ref{lem:s3}.
\end{proof}

\section{Proof of Theorem~\ref{thm:pointwise}}\label{sec:proof1}

The cases $n<3$ are absorbed by the absolute implied constant, so
assume $n\geq 3$.  Let $\mu\vdash n$ be zero-free.  By
Proposition~\ref{prop:duro}, $\mu=(3^a,2^b,1^c)$ with $b$ even.  By
Corollaries~\ref{cor:bbound} and~\ref{cor:abound}, we have
\[
  0\leq a\leq s_3(n)\ll n^{1/4},\qquad
  0\leq b\leq s_2(n)\ll n^{1/2}.
\]
For each pair $(a,b)$, the value of $c$ is forced by $c=n-3a-2b$.
Counting all such pairs (the parity condition on $b$ would save a
further factor of $2$, which we do not need) gives
\[
  D(n)\leq(s_3(n)+1)(s_2(n)+1)\ll n^{3/4}. \qedhere
\]

\section{The almost-all refinement}\label{sec:almostall}

We now prove Theorem~\ref{thm:almostall} using work of Matom\"aki and
Radziwi\l\l\ \cite{MR}.  We state only the special form needed here.  A
subset $\mathcal{N}\subseteq\N$ is called \emph{multiplicative} if,
whenever $(m,n)=1$, we have
\[
  mn\in\mathcal{N}\iff m\in\mathcal{N}\text{ and }n\in\mathcal{N}.
\]
For such a set define
\[
  \delta(\mathcal{N};X):=\prod_{\substack{p\leq X\\ p\notin\mathcal{N}}}
  \left(1-\frac{1}{p}\right).
\]

\begin{theorem}[Matom\"aki--Radziwi\l\l\ gap theorem]\label{thm:MR}
Let $\mathcal{N}\subseteq\N$ be multiplicative.  Suppose that there is
a constant $\alpha>0$ such that, uniformly for $2\leq w\leq z$, one has
\begin{equation}\label{eq:sieve-cond}
  \sum_{\substack{w<p\leq z\\ p\in\mathcal{N}}}\frac{1}{p}
  \geq\alpha\sum_{w<p\leq z}\frac{1}{p}-O\!\left(\frac{1}{\log w}\right).
\end{equation}
Let $1\leq u_1<u_2<u_3<\cdots$ be the increasing sequence of elements of
$\mathcal{N}$.  Then, for every $1\leq\gamma<3/2$, we have
\[
  \sum_{u_i\leq X}(u_{i+1}-u_i)^{\gamma}
  \ll_{\alpha,\gamma}X\,\delta(\mathcal{N};X)^{1-\gamma}.
\]
\end{theorem}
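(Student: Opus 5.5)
This is a special case of the main gap theorem of Matom\"aki and Radziwi\l\l\ \cite{MR}. I would first try to deduce it by citation: the hypothesis \eqref{eq:sieve-cond} is their lower-density condition on the primes in $\mathcal{N}$, and $\delta(\mathcal{N};X)$ is their normalizing density. So the main verification is that our normalization matches theirs. Since the rest of the paper only uses the statement as a black box, that would suffice. To make clear what is being imported, I would also sketch how the bound reduces to a short-interval estimate.

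\emph{Reduction to empty short intervals.} For $h\geq 1$, let
\[
  E(h):=\#\{x\leq X:\ (x,x+h]\cap\mathcal{N}=\emptyset\}.
\]
A gap $g=u_{i+1}-u_i$ with $u_i\leq X$ contributes at least $g-h$ to $E(h)$ for every $h<g$. For $\gamma>1$ one has
\[
  \sum_{h<g}h^{\gamma-2}(g-h)\asymp_\gamma g^{\gamma},
\]
so
\[
  \sum_{u_i\leq X}(u_{i+1}-u_i)^{\gamma}\ll_\gamma\sum_{h\geq 1}h^{\gamma-2}E(h),
\]
up to boundary terms from the last gap, which are handled the same way. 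The case $\gamma=1$ is trivial, since the gaps telescope to $O(X)$ plus the final gap, and that final gap is controlled by the same estimate.

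\emph{Splitting the $h$-sum.} In the range $h\leq 1/\delta$, where $\delta=\delta(\mathcal{N};X)$, I would use the trivial bound $E(h)\leq X$. This contributes
\[
  X\sum_{h\leq 1/\delta}h^{\gamma-2}\ll X\delta^{1-\gamma}.
\]
In the range $h>1/\delta$ I need a bound of the form
\[
  E(h)\ll X\,(h\delta)^{-\beta}\qquad\text{for some }\beta>\gamma-1.
\]
Given such a bound, the tail sum is $\ll X\delta^{1-\gamma}$ as well, which proves the theorem.

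\emph{The key input.} I would obtain the bound on $E(h)$ by applying Chebyshev's inequality to the indicator function $1_{\mathcal{N}}$, suitably weighted to be multiplicative, on intervals $(x,x+h]$. The expected count in such an interval is $\asymp h\delta$ by \eqref{eq:sieve-cond} and a sieve or Wirsing-type mean value estimate. The real content is the variance bound: the Matom\"aki--Radziwi\l\l\ short-interval theorem for multiplicative functions shows that the variance over $x\leq X$ is smaller than the squared mean by a power of $h\delta$. Their method (Halász-type mean value estimates, Parseval, and factorization of typical elements) gives any $\beta<1/2$, and this is exactly what produces the range $\gamma<3/2$.

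This variance estimate is the main obstacle, and I would not reprove it. I would quote it from \cite{MR}, checking only that \eqref{eq:sieve-cond} supplies the uniform lower density of primes in $\mathcal{N}$ that their argument requires, and that the implied constants depend only on $\alpha$ and $\gamma$.
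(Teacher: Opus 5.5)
Your proposal is correct and follows the same route as the paper, which also proves this statement purely by citation: it notes that the uniform hypothesis \eqref{eq:sieve-cond} holds in particular in the restricted range $2\leq w\leq z\leq X^{\alpha}$ required there, and then invokes \cite[Corollary~1.2(ii)]{MR}. Your sketch of the mechanism behind that result is supplementary and not needed for the argument; it bounds the gap moments by counts of empty intervals $(x,x+h]$ together with an exceptional-set bound with exponent just below $1/2$.
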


\begin{proof}
Our assumptions imply that \eqref{eq:sieve-cond} holds for all
$2\leq w\leq z\leq X^{\alpha}$.  Therefore, \cite[Corollary~1.2(ii)]{MR} holds.
\end{proof}

\subsection{Multiplicative Loeschian subsets in fixed residue classes}

The $3$-core condition requires Loeschian numbers congruent to $3n+1$
modulo $9$.  Since $3n+1$ is always congruent to $1$, $4$, or $7$
modulo $9$, these are the only residue classes needed.

Define
\[
\mathcal{A}=\{m\in\N\colon v_3(m)=0,~\textup{$6\mid v_p(m)$ every prime $p\not\equiv 1\pmod{9}$}\}.
\]
Thus primes congruent to $1$ modulo $9$ may occur to arbitrary exponent,
while every other prime must occur to exponent divisible by $6$, except
that the prime $3$ is not allowed to occur. Recall that $\Lo=\{x^2+xy+y^2:\ x,y\in\Z\}\cap\N$. 

\begin{lemma}\label{lem:A}
The set $\mathcal{A}$ has the following properties.
\begin{enumerate}
\item[(i)] $\A$ is multiplicative.
\item[(ii)] $\A\subseteq\{m\in\Lo:\ m\equiv 1\pmod 9\}$.
\item[(iii)] $\delta(\A;X)\asymp(\log X)^{-5/6}$.
\item[(iv)] If $1\leq a_1<a_2<\cdots$ are the elements of $\A$, then
for every $1\leq\gamma<3/2$,
\[
  \sum_{a_i\leq X}(a_{i+1}-a_i)^{\gamma}
  \ll_{\gamma}X(\log X)^{\frac{5}{6}(\gamma-1)}.
\]
\end{enumerate}
\end{lemma}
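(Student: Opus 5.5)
I will prove the four parts in order, treating (i) and (iii) as bookkeeping and putting the arithmetic content into (ii). For (i), the condition defining $\A$ is a conjunction of conditions on the individual exponents $v_p(m)$. When $(m,n)=1$, each prime divides at most one of $m,n$, so $v_p(mn)$ equals $v_p(m)$ or $v_p(n)$, and the other valuation is $0$. Zero is divisible by $6$ and satisfies $v_3=0$, so $mn\in\A$ if and only if $m\in\A$ and $n\in\A$. Note that $1\in\A$.

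For (ii), take $m\in\A$ and write $m=m_1\cdot k^6$. Here $m_1$ is a product of primes $p\equiv1\pmod 9$, and $k$ is coprime to $3$.

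\emph{Congruence mod $9$.} We have $m_1\equiv1\pmod 9$ trivially. The group $(\Z/9\Z)^\times$ is cyclic of order $6$, so $k^6\equiv1\pmod 9$. Hence $m\equiv1\pmod 9$.

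\emph{Loeschian property.} I use the prime-factorization criterion from the proof of Lemma~\ref{lem:3core}: a positive integer is a norm from $\Z[\omega]$ if and only if every prime $p\equiv2\pmod 3$ occurs to even exponent. Primes $p\equiv1\pmod9$ are $\equiv1\pmod3$, so they impose no condition. Every other prime appears to exponent divisible by $6$, hence even. Therefore $m\in\Lo$. Alternatively, norms are closed under multiplication and $p^6=N(p^3)$, which gives the same conclusion directly.

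For (iii), the primes not in $\A$ are all primes except those with $p\equiv1\pmod 9$. This is because a prime $p\equiv1\pmod9$ has $v_p(p)=1$ and $p\in\A$, while any other prime $p$ has $v_p(p)=1$, not divisible by $6$, so $p\notin\A$. Thus
\[
\delta(\A;X)=\prod_{\substack{p\le X\\ p\not\equiv1 \ (9)}}\Bigl(1-\frac1p\Bigr).
\]
By Mertens' theorem in arithmetic progressions, $\sum_{p\le X,\,p\equiv a\ (9)}1/p=\frac{1}{6}\log\log X+c_a+O(1/\log X)$ for each $a$ coprime to $9$, since $\varphi(9)=6$. The primes $\not\equiv1\pmod 9$ comprise $5$ of the $6$ reduced classes, plus the prime $3$. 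Taking logarithms and using $\log(1-1/p)=-1/p+O(1/p^2)$ gives $\log\delta=-\frac56\log\log X+O(1)$, so $\delta(\A;X)\asymp(\log X)^{-5/6}$.

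For (iv), I apply Theorem~\ref{thm:MR} with $\mathcal N=\A$ and $\alpha=1/7$. The sieve condition \eqref{eq:sieve-cond} follows from the same Mertens-in-progressions estimate, applied by differencing at $z$ and $w$:
\[
\sum_{\substack{w<p\le z\\ p\equiv1\ (9)}}\frac1p=\frac16\sum_{w<p\le z}\frac1p+O\Bigl(\frac1{\log w}\Bigr).
\]
This gives the condition with $\alpha=1/6$, and a fortiori with any smaller constant. Theorem~\ref{thm:MR} then yields
\[
\sum_{a_i\le X}(a_{i+1}-a_i)^\gamma\ll X\,\delta(\A;X)^{1-\gamma}.
\]
Since $1-\gamma\le0$, inserting (iii) gives $\delta(\A;X)^{1-\gamma}\ll(\log X)^{\frac56(\gamma-1)}$, which is the claimed bound.

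The step that needs care is the uniformity of the error term $O(1/\log w)$ in the prime-sum asymptotic. This is standard, via the prime number theorem for the fixed modulus $9$ and partial summation. I also need to confirm that Theorem~\ref{thm:MR} applies as stated to a multiplicative set containing $1$, which it does.
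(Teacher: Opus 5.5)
Your proposal is correct and follows essentially the same route as the paper. You use prime-by-prime multiplicativity for (i), the Eisenstein-norm factorization criterion together with $p^6\equiv 1\pmod 9$ for (ii), Mertens' theorem in progressions modulo $9$ for (iii), and the Matom\"aki--Radziwi\l\l\ gap theorem with the sieve hypothesis checked via the same Mertens estimate for (iv); your remarks that $\alpha=1/6$ already works and that the prime $3$ only contributes a constant factor to $\delta(\A;X)$ are harmless clarifications of the paper's argument.
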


\begin{proof}
The definition of $\A$ is prime-by-prime.  If $(m,n)=1$, then each
prime exponent in $mn$ is the corresponding exponent in exactly one of
$m$ or $n$.  Hence the defining conditions hold for $mn$ if and only if
they hold for both $m$ and $n$.  This proves (i).

For (ii), let $m\in\A$.  If $p\equiv 2\pmod 3$, then
$p\not\equiv 1\pmod 9$, so $v_p(m)$ is a multiple of $6$ and in
particular is even.  By the prime-factorization criterion for the
Eisenstein norm form, $m\in\Lo$.  Also $3\nmid m$.  If
$p\equiv 1\pmod 9$, then $p^e\equiv 1\pmod 9$ for every $e\geq 0$.  If
$p\not\equiv 1\pmod 9$ and $p\neq 3$, then $p^6\equiv 1\pmod 9$, and
$v_p(m)$ is a multiple of $6$.  Therefore every prime-power factor of
$m$ is congruent to $1$ modulo $9$, so $m\equiv 1\pmod 9$.

The primes belonging to $\A$ are exactly the primes
$p\equiv 1\pmod 9$.  Therefore
\[
  \delta(\A;X)=\prod_{\substack{p\leq X\\ p\not\equiv 1\ (9)}}
  \left(1-\frac{1}{p}\right).
\]

Mertens' theorem for arithmetic progressions \cite{Wi} gives
\[
  \sum_{\substack{p\leq X\\ p\equiv 1\ (9)}}\frac{1}{p}
  =\frac{1}{6}\log\log X+O(1),
  \qquad\text{hence}\qquad
  \sum_{\substack{p\leq X\\ p\not\equiv 1\ (9)}}\frac{1}{p}
  =\frac{5}{6}\log\log X+O(1).
\]
Taking logarithms of the Euler product for $\delta(\A;X)$ proves
(iii).

Mertens' theorem for arithmetic progressions also gives, uniformly for
$2\leq w\leq z$,
\[
  \sum_{\substack{w<p\leq z\\ p\equiv 1\ (9)}}\frac{1}{p}
  =\frac{1}{6}\sum_{w<p\leq z}\frac{1}{p}
  +O\!\left(\frac{1}{\log w}\right).
\]
Hence the hypothesis \eqref{eq:sieve-cond} of Theorem~\ref{thm:MR}
holds for $\A$, for instance with any fixed $\alpha<1/6$.  Combining
Theorem~\ref{thm:MR} with (iii) gives
\[
  \sum_{a_i\leq X}(a_{i+1}-a_i)^{\gamma}
  \ll_{\gamma}X\,\delta(\A;X)^{1-\gamma}
  \ll_{\gamma}X(\log X)^{\frac{5}{6}(\gamma-1)},
\]
which proves (iv).
\end{proof}

Note that $1,4,7\in\Lo$ since
\[
  1=1^2+1\cdot 0+0^2,\qquad 4=2^2+2\cdot 0+0^2,\qquad
  7=2^2+2\cdot 1+1^2.
\]
For $r\in\{1,4,7\}$ define $\B_r:=r\A$.  Because Loeschian numbers
are norms and norms multiply, $\B_r\subseteq\Lo$.  By
Lemma~\ref{lem:A}, every element of $\B_r$ is congruent to $r$ modulo
$9$.  Hence
\[
  \B_r\subseteq\{m\in\Lo:\ m\equiv r\pmod 9\}.
\]
If $1\leq b^{(r)}_1<b^{(r)}_2<\cdots$ are the elements of $\B_r$, then
Lemma~\ref{lem:A}(iv) implies that, for every $1\leq\gamma<3/2$,
\begin{equation}\label{eq:Br-gaps}
  \sum_{b^{(r)}_i\leq X}
  \bigl(b^{(r)}_{i+1}-b^{(r)}_i\bigr)^{\gamma}
  \ll_{\gamma,r}X(\log X)^{\frac{5}{6}(\gamma-1)}.
\end{equation}
Indeed, $\B_r$ is obtained from $\A$ by multiplication by the fixed
integer $r$.

\subsection{A tail estimate for the 3-core distance}

\begin{proposition}\label{prop:tail}
Let $1<\gamma<3/2$.  For $X\geq 3$ and every integer $S\geq 1$,
\[
  \#\{1\leq n\leq X:\ s_3(n)>S\}
  \ll_{\gamma}S+X(\log X)^{\frac{5}{6}(\gamma-1)}S^{1-\gamma}.
\]
\end{proposition}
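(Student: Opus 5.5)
The plan is to reduce the condition $s_3(n)>S$ to a long gap in one of the sequences $\B_r$ and then count such $n$ with the gap-moment bound \eqref{eq:Br-gaps}. Fix $n$ with $s_3(n)>S$ and put $N=3n+1\leq 3X+1$. Let $r\in\{1,4,7\}$ be the residue with $N\equiv r\pmod 9$. If some $m\in\B_r$ satisfies $N-9S\leq m\leq N$, then $m\equiv N\pmod 9$. Writing $m=N-9s$ with $0\leq s\leq S$, we have $m=3(n-3s)+1\in\Lo$, since $\B_r\subseteq\Lo$. Also $n-3s\geq 0$ because $m\geq 1$. Lemma~\ref{lem:3core} then gives $s_3(n)\leq s\leq S$, a contradiction. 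Hence the window $(N-9S,N]$ contains no element of $\B_r$.

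Now split according to whether $N$ exceeds the least element of $\B_r$, which is $r\leq 7$. If it does, let $b^{(r)}_i$ be the largest element of $\B_r$ that is at most $N$. Then $b^{(r)}_i<N-9S$ and $b^{(r)}_{i+1}>N$, so the gap $g_i=b^{(r)}_{i+1}-b^{(r)}_i$ exceeds $9S$. Moreover, $N$ lies in $(b^{(r)}_i,b^{(r)}_{i+1})$, and $b^{(r)}_i\leq 3X+1$. Each gap of length $g_i$ contains at most $g_i$ values of $N$, and each $N$ determines $n$. The number of $n$ arising this way is therefore at most
\[
  \sum_{r}\sum_{\substack{b^{(r)}_i\leq 3X+1\\ g_i>9S}} g_i
  \leq \sum_r (9S)^{1-\gamma}\sum_{b^{(r)}_i\leq 3X+1} g_i^{\gamma},
\]
using $g_i\leq g_i^\gamma(9S)^{1-\gamma}$ when $g_i>9S$ and $\gamma>1$. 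Applying \eqref{eq:Br-gaps} at $3X+1$ bounds this by $\ll_\gamma X(\log X)^{\frac56(\gamma-1)}S^{1-\gamma}$, after noting $\log(3X+1)\asymp\log X$ for $X\geq 3$.

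The remaining $n$ are those where the window is empty simply because $N$ is small, that is, $N\leq 7$. Only $O(1)\leq O(S)$ values of $n$ fall here. More generally, any $n\ll S$ can be absorbed into the $S$ term, which is why that term appears in the statement.

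The main obstacle is bookkeeping rather than analysis: all the analytic input is already in \eqref{eq:Br-gaps}, which rests on Theorem~\ref{thm:MR}. What needs care is that the $s\leq S$ produced is admissible in Definition~\ref{def:s3}, which requires $n-3s\geq 0$; this is automatic since $m>0$. The other points are that the residue class mod $9$ matches exactly, that the three residues $r$ are handled separately with constants depending on $r$ absorbed, and that the range $b_i\leq 3X+1$ rather than $X$ only affects constants.
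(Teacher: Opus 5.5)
Your proof is correct and takes essentially the same route as the paper. In both, an empty window $[N-9S,N]$ for $\B_r$ forces $N$ into a gap of length greater than $9S$, and the count of such $N$ is controlled by $g_i\leq(9S)^{1-\gamma}g_i^{\gamma}$ together with \eqref{eq:Br-gaps}. The only difference is that the paper discards the range $N\leq 9S+10$, costing $O(S)$, whereas you take $b^{(r)}_i$ to be the largest element of $\B_r$ not exceeding $N$ and read off $b^{(r)}_i<N-9S$ directly, so your small-$N$ case costs only $O(1)$ (in fact it is empty, since $N=r$ would give $s_3(n)=0$).
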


\begin{proof}
Let $1\leq n\leq X$ and put $N=3n+1$.  Then $N\leq 3X+1$ and
$N\equiv 1,4,$ or $7\pmod 9$.  Let $r\in\{1,4,7\}$ be the residue class
of $N$ modulo $9$.

If $s_3(n)>S$, then none of the integers
$N,N-9,N-18,\ldots,N-9S$ belongs to $\Lo$.  Since every element of
$\B_r$ lying in $[N-9S,N]$ is congruent to $r\equiv N\pmod 9$, hence of
the form $N-9s$ with $0\leq s\leq S$, the interval $[N-9S,N]$ contains
no element of $\B_r$.

The initial range $N\leq 9S+10$ contributes $O(S)$ possible integers
$n$.  Outside this range, $N-9S>10$, so there is at least one element
of $\B_r$ below $N-9S$; for example, $r\in\B_r$ and $r\leq 7$.  Let
\[
  b^{(r)}_i<N<b^{(r)}_{i+1}
\]
be the consecutive elements of $\B_r$ surrounding $N$.  Since
$[N-9S,N]$ contains no element of $\B_r$, we have
$b^{(r)}_i<N-9S<N<b^{(r)}_{i+1}$, and therefore
\[
  g^{(r)}_i:=b^{(r)}_{i+1}-b^{(r)}_i>9S.
\]

For a fixed gap of length $g^{(r)}_i$, the number of integers
$N\equiv r\pmod 9$ lying in that gap is at most $g^{(r)}_i/9+1\ll
g^{(r)}_i$.  Each such $N$ corresponds to at most one integer $n$,
namely $n=(N-1)/3$.  Hence
\[
  \#\{1\leq n\leq X:\ s_3(n)>S\}
  \ll S+\sum_{r\in\{1,4,7\}}
  \sum_{\substack{b^{(r)}_i\leq 3X+1\\ g^{(r)}_i>9S}}g^{(r)}_i.
\]
Since $g^{(r)}_i>9S$ and $\gamma>1$,
\[
  g^{(r)}_i\leq(9S)^{1-\gamma}\bigl(g^{(r)}_i\bigr)^{\gamma}.
\]
Using \eqref{eq:Br-gaps}, we obtain
\[
  \sum_{\substack{b^{(r)}_i\leq 3X+1\\ g^{(r)}_i>9S}}g^{(r)}_i
  \ll_{\gamma}S^{1-\gamma}X(\log X)^{\frac{5}{6}(\gamma-1)}.
\]
Summing over $r=1,4,7$ proves the proposition.
\end{proof}

\begin{proof}[Proof of Theorem~\ref{thm:almostall}]
Fix $B>5/6$ and $1<\gamma<3/2$.  Put $S=\lfloor(\log X)^B\rfloor$.
Proposition~\ref{prop:tail} gives
\begin{align*}
  \#\{1\leq n\leq X:\ s_3(n)>(\log X)^B\}
  &\ll_{B,\gamma}(\log X)^B+X(\log X)^{(\frac56-B)(\gamma-1)} \\
  &\ll_{B,\gamma}X(\log X)^{-(B-\frac56)(\gamma-1)}.
\end{align*}
We now convert this into a statement with $(\log n)^B$ rather than
$(\log X)^B$.  Apart from the $O(X^{1/2})$ integers $n<X^{1/2}$, every
$n\leq X$ satisfies $\log n\geq\tfrac12\log X$.  Thus, for
$n\in[X^{1/2},X]$,
\[
  (\log X)^B\leq 2^B(\log n)^B.
\]
It follows that
\[
  \#\{1\leq n\leq X:\ s_3(n)>2^B(\log n)^B\}
  \ll_{B,\gamma}X(\log X)^{-(B-\frac56)(\gamma-1)}.
\]

For $n$ outside this exceptional set, repeat the counting argument from
the proof of Theorem~\ref{thm:pointwise}.  A zero-free column has type
$(3^a,2^b,1^c)$ with
\[
  a\leq s_3(n)\ll_B(\log n)^B,\qquad b\leq s_2(n)\ll n^{1/2},
\]
and the value of $c$ is determined by $c=n-3a-2b$.  Hence, outside a
set of at most $O_{B,\gamma}\bigl(X(\log X)^{-(B-5/6)(\gamma-1)}\bigr)$
integers $n\leq X$,
\[
  D(n)\leq(s_3(n)+1)(s_2(n)+1)\ll_B n^{1/2}(\log n)^B.
\]
This is the asserted estimate.
\end{proof}

\section{A heuristic and conjecture}\label{sec:heuristic}

The following discussion offers a heuristic and conjecture regarding
the size of $D(n)$.
The theorems above reduce the search for zero-free columns to the
sparse family
\[
  \mu=(3^a,2^b,1^c),\qquad
  0\leq a\ll n^{1/4},\qquad 0\leq b\ll n^{1/2},\qquad
  b\equiv 0\!\!\pmod 2.
\]
This leaves $O(n^{3/4})$ possible columns.  Duro's exact problem asks
which, if any, of these columns besides $(1^n)$ are actually zero-free.

A naive random model would suggest that very few survive.  For a fixed
non-identity column $\mu$, the values $\chi^{\lambda}(\mu)$ as
$\lambda$ varies are constrained integers, but after the obvious core
obstructions have been removed there is no evident reason that all
$p(n)$ of them should avoid zero.  Even a very small independent
probability of vanishing among the rows would make the probability that
the entire column is zero-free exponentially small in a power of
$p(n)$.

A more arithmetic version of the same heuristic is as follows.  Write
$\mu=(3^a,2^b,1^c)$.  The Murnaghan--Nakayama formula computes
$\chi^{\lambda}(\mu)$ as a signed sum over ways of removing $a$ rim
hooks of length $3$, $b$ rim hooks of length $2$, and then $c$ rim
hooks of length $1$.  The separate $2$-core and $3$-core arguments only
show that removal is impossible when $a$ or $b$ is too large.  But when
both $a$ and $b$ are present, the signed sums should usually have many
competing terms of both signs.  It is therefore reasonable to expect
many cancellations to zero unless $(a,b)$ lies in a very exceptional
set.

\begin{remark}\label{rem:typeIII}
The identity \eqref{eq:hooks-weight} shows that our vanishing input is
exactly as strong as the full type III (Stanley) criterion of
Section~\ref{sec:intro}.  Indeed, for $\mu=(3^a,2^b,1^c)$ the only
relevant moduli are $t=2$ and $t=3$, with $\mathcal{P}_2(\mu)=b$ and
$\mathcal{P}_3(\mu)=a$, and by \eqref{eq:hooks-weight},
\[
  \min_{\lambda\vdash n}\mathcal{H}_t(\lambda)
  =\min_{\lambda\vdash n}\wt_t(\lambda)=s_t(n).
\]
Thus the set of columns not eliminated by any type III zero is
precisely the rectangle $0\leq a\leq s_3(n)$, $0\leq b\leq s_2(n)$
counted in the proofs of Theorems~\ref{thm:pointwise}
and~\ref{thm:almostall}.  Any improvement in the exponents of these
theorems therefore requires exhibiting, in each surviving column, a
zero that is \emph{not} of type III---exactly the class of zeros that
current techniques cannot count \cite{PS}.
\end{remark}

One possible conjectural formulation is the following.

\begin{conjecture*}
For all $\varepsilon>0$, there exists a constant $c(\varepsilon)>0$
such that $D(n)\leq c(\varepsilon)n^{\varepsilon}$.
\end{conjecture*}

An even stronger possibility, compatible with Duro's computations, is
that $D(n)=1$ for infinitely many $n$, or perhaps for almost all $n$.
We do not know how to prove either statement.  The main obstacle is
that current core-partition arguments give only impossibility of
rim-hook removals, whereas the conjectural improvement would need to
exploit cancellation inside the Murnaghan--Nakayama sums or a deeper
columnwise vanishing criterion.

\section{Appendix}\label{sec:appendix}

\subsection{AxiomProver produced a formal Certificate}

The paper was written with human-AI collaboration; the outline of the proofs were crafted by the human authors, and then completed with AI assistance.
In particular, AxiomProver, an AI system currently under development by Axiom Math, generated a formal certificate for Theorem~\ref{thm:pointwise} and Theorem~\ref{thm:almostall}, relative to existing literature. Namely, these results were formalized and proved in Lean assuming the Murnaghan-Nakayama rule, a deep result of Matom\"aki--Radziwi\l\l\, and standard facts about 2-core and 3-core partitions.  For explicit details, see 
\begin{center}
\url{https://github.com/AxiomMath/ZeroFree}
\end{center}
This directory contains a formal challenge file containing the statements of the two theorems, which can be mechanically verified using the Comparator tool in Lean.

\subsection{Computation of \texorpdfstring{$D(n)$}{D(n)}}
\label{subsec:computation}

We include the details behind the values displayed in the
introduction.  For fixed $n$, Proposition~\ref{prop:duro} reduces the
search for zero-free columns to the candidate cycle types
$(3^a,2^b,1^c)$ with $b$ even; Corollaries~\ref{cor:bbound}
and~\ref{cor:abound} further restrict to $a\leq s_3(n)$ and
$b\leq s_2(n)$, leaving $O(n^{3/4})$ columns to test.

For each candidate $\mu$, the values $\chi^{\lambda}(\mu)$ were
computed recursively using the Murnaghan--Nakayama rule, with rim
hooks enumerated via beta-numbers and dimensions
$\chi^{\lambda}(1^m)$ computed by the branching rule.  The recursion
was memoized on the triple $(\lambda,a',b')$, which allows the memo
table to be shared across all candidate columns for a given $n$.  The
calculation was stopped as soon as a partition $\lambda$ with
$\chi^{\lambda}(\mu)=0$ was found; only genuinely zero-free columns
require checking all rows.  As sanity checks on the implementation,
the full character tables for $n\leq 8$ were verified against the
usual row and column orthogonality relations, and for $n\leq 14$ the
reduced search was verified against a brute-force search over all
$p(n)^2$ entries; the two agree for all $3\leq n\leq 14$, and the
unique discrepancy at $n=2$ is the exceptional transposition column of
Remark~\ref{rem:n2}.

Table~\ref{tab:Dn} records $D(n)$ for $1\leq n\leq 43$.

\begin{table}[ht]
\centering
\small
\begin{tabular}{c|ccccccccccccccc}
$n$    & 1 & 2 & 3 & 4 & 5 & 6 & 7 & 8 & 9 & 10 & 11 & 12 & 13 & 14 & 15\\
$D(n)$ & 1 & 2 & 2 & 2 & 1 & 1 & 4 & 1 & 1 & 1  & 4  & 1  & 4  & 1  & 2\\[2pt]
\hline\\[-8pt]
$n$    & 16 & 17 & 18 & 19 & 20 & 21 & 22 & 23 & 24 & 25 & 26 & 27 & 28 & 29 & 30\\
$D(n)$ & 2  & 1  & 2  & 2  & 3  & 1  & 3  & 2  & 2  & 2  & 3  & 2  & 1  & 3  & 1\\[2pt]
\hline\\[-8pt]
$n$    & 31 & 32 & 33 & 34 & 35 & 36 & 37 & 38 & 39 & 40 & 41 & 42 & 43 & &\\
$D(n)$ & 5  & 1  & 3  & 1  & 2  & 1  & 1  & 1  & 5  & 1  & 3  & 1  & 7  & &
\end{tabular}
\medskip
\caption{Values of $D(n)$ for $1\leq n\leq 43$.}
\label{tab:Dn}
\end{table}

In particular, the integers $n\leq 43$ with $D(n)=1$ are
$1$, $5$, $6$, $8$, $9$, $10$, $12$, $14$, $17$, $21$, $28$, $30$,
$32$, $34$, $36$, $37$, $38$, $40$, $42$, extending the list reported
by Duro \cite{Duro} by the entries $40$ and $42$.  As an illustration
of how nearly the bounds of Corollaries~\ref{cor:bbound}
and~\ref{cor:abound} can be attained, the seven zero-free columns of
$S_{43}$ (where $s_3(43)=1$ and $s_2(43)=11$) are
\[
  (1^{43}),\quad (2^4,1^{35}),\quad (2^8,1^{27}),\quad
  (3,2^2,1^{36}),\quad (3,2^4,1^{32}),\quad (3,2^6,1^{28}),\quad
  (3,2^{10},1^{20}).
\]


\begin{thebibliography}{99}

\bibitem{BC}
R.~P. Bambah and S.~Chowla,
\emph{On numbers which can be expressed as a sum of two squares},
Proc. Nat. Inst. Sci. India \textbf{13} (1947), 101--103.

\bibitem{Duro}
D.~Mart\'in Duro,
\emph{Arithmetic properties of character degrees and the generalised
Knutson index},
Ramanujan J. \textbf{69} (2026), article no.~37.

\bibitem{GO}
A.~Granville and K.~Ono,
\emph{Defect zero $p$-blocks for finite simple groups},
Trans. Amer. Math. Soc. \textbf{348} (1996), no.~1, 331--347.

\bibitem{HR}
G.~H. Hardy and S.~Ramanujan,
\emph{Asymptotic formulae in combinatory analysis},
Proc. London Math. Soc. (2) \textbf{17} (1918), 75--115.

\bibitem{JK}
G.~James and A.~Kerber,
\emph{The Representation Theory of the Symmetric Group},
Encyclopedia of Mathematics and its Applications, Vol.~16,
Addison--Wesley, Reading, MA, 1981.

\bibitem{Ma}
J.~U. Marshall,
\emph{The Loeschian numbers as a problem in number theory},
Geogr. Anal. \textbf{7} (1975), no.~4, 421--426.

\bibitem{MR}
K.~Matom\"aki and M.~Radziwi\l\l,
\emph{Multiplicative functions in short intervals II},
arXiv:2007.04290.

\bibitem{Mi}
A.~R. Miller,
\emph{The probability that a character value is zero for the symmetric
group},
Math. Z. \textbf{277} (2014), no.~3--4, 1011--1015.

\bibitem{MS}
A.~R. Miller and D.~Scheinerman,
\emph{Large-scale Monte Carlo simulations for zeros in character
tables of symmetric groups},
Math. Comp. \textbf{94} (2025), no.~351, 505--515.

\bibitem{Ol}
J.~B. Olsson,
\emph{Combinatorics and representations of finite groups},
Vorlesungen aus dem Fachbereich Mathematik der Universit\"at Essen,
Heft 20, 1993.

\bibitem{On}
K.~Ono,
\emph{On the positivity of the number of $t$-core partitions},
Acta Arith. \textbf{66} (1994), no.~3, 221--228.

\bibitem{PS}
S.~Peluse and K.~Soundararajan,
\emph{Zeros in the character table of the symmetric group},
arXiv:2603.28510.

\bibitem{Wi}
K.~S. Williams,
\emph{Mertens' theorem for arithmetic progressions},
J. Number Theory \textbf{6} (1974), 353--359.

\end{thebibliography}
\end{document}